\documentclass[10pt]{amsart}

\usepackage{fullpage}
\usepackage[utf8]{inputenc}
\usepackage[T2A,T1]{fontenc}
\usepackage{textcomp}
\usepackage{yfonts}

\usepackage{amsthm}
\usepackage{mathtools}
\usepackage{bm}
\usepackage{enumitem}
\usepackage{microtype}
\usepackage{amsmath}
\usepackage{amsfonts}
\usepackage{amssymb}
\usepackage{amscd}
\usepackage{stmaryrd}
\usepackage{mathrsfs}
\usepackage{mathtext}
\usepackage{accents}
\usepackage{nicefrac}

\usepackage{comment}
\usepackage{enumerate}
\usepackage{fvextra}
\usepackage{xparse}

\usepackage[dvipsnames]{xcolor}
\usepackage{tikz}
\usetikzlibrary{arrows.meta,calc}
\usepackage[all,tips]{xy}
\usepackage{soul}
\usepackage[normalem]{ulem}
\usepackage{tcolorbox}
\tcbuselibrary{breakable,skins}

\makeatletter
\@ifclassloaded{svjour3}{}{\RequirePackage{amsthm}}
\makeatother

\usepackage{hyperref}

\definecolor{OldColor}{HTML}{FF9900} %
\definecolor{NewColor}{HTML}{0000CC} %
\definecolor{MarkColor}{HTML}{FFF59D} %
\sethlcolor{MarkColor}

\newif\ifshowmarkup
\showmarkupfalse       %

\ifshowmarkup

\newenvironment{newstuff}
  {\begingroup\color{NewColor}\bfseries}
  {\endgroup}

\NewDocumentCommand{\todobox}{ O{yellow} m }{%
  \begin{tcolorbox}[
    enhanced,
    breakable,
    colback=#1!10,        %
    colframe=#1!70!black, %
    boxrule=0.8pt,
    arc=2pt,
    left=6pt,right=6pt,top=4pt,bottom=4pt
  ]
  \emph{TODO:}~#2
  \end{tcolorbox}
}

\NewDocumentCommand{\donebox}{ O{gray} m }{%
  \begin{tcolorbox}[
    enhanced,
    breakable,
    colback=#1!10,        %
    colframe=#1!70!black, %
    boxrule=0.8pt,
    arc=2pt,
    left=6pt,right=6pt,top=4pt,bottom=4pt
  ]
  \emph{DONE:}~#2
  \end{tcolorbox}
}

\NewDocumentCommand{\todoboxc}{ O{yellow!15} O{yellow!60!black} m }{%
  \begin{tcolorbox}[
    enhanced, breakable,
    colback=#1,
    colframe=#2,
    boxrule=0.8pt, arc=2pt
  ]
  \emph{TODO:}~#3
  \end{tcolorbox}
}

\else

  \excludecomment{oldstuff}

  \NewDocumentCommand{\todobox}{ O{yellow} m }{}
  \NewDocumentCommand{\donebox}{ O{gray} m }{}
  \NewDocumentCommand{\todoboxc}{ O{yellow!15} O{yellow!60!black} m }{}

\fi

\makeatletter
\@ifclassloaded{svjour3}{}{%
\newtheorem{theorem}{Theorem}[section]
\newtheorem{lemma}[theorem]{Lemma}

\newtheorem{proposition}[theorem]{Proposition}
\theoremstyle{definition}

\newtheorem{example}[theorem]{Example}
\theoremstyle{remark}
\newtheorem{remark}[theorem]{Remark}
\numberwithin{equation}{section}
}
\makeatother

\newcommand{\bbB}{{\mathbb B}}

\newcommand{\bbE}{{\mathbb E}}

\newcommand{\bbH}{{\mathbb H}}

\newcommand{\bbR}{{\mathbb R}}
\newcommand{\bbS}{{\mathbb S}}

\newcommand{\calA}{{\mathcal A}}

\makeatletter
\DeclareFontFamily{OMX}{MnSymbolE}{}
\DeclareSymbolFont{MnLargeSymbols}{OMX}{MnSymbolE}{m}{n}
\SetSymbolFont{MnLargeSymbols}{bold}{OMX}{MnSymbolE}{b}{n}
\DeclareFontShape{OMX}{MnSymbolE}{m}{n}{
    <-6>  MnSymbolE5
   <6-7>  MnSymbolE6
   <7-8>  MnSymbolE7
   <8-9>  MnSymbolE8
   <9-10> MnSymbolE9
  <10-12> MnSymbolE10
  <12->   MnSymbolE12
}{}
\DeclareFontShape{OMX}{MnSymbolE}{b}{n}{
    <-6>  MnSymbolE-Bold5
   <6-7>  MnSymbolE-Bold6
   <7-8>  MnSymbolE-Bold7
   <8-9>  MnSymbolE-Bold8
   <9-10> MnSymbolE-Bold9
  <10-12> MnSymbolE-Bold10
  <12->   MnSymbolE-Bold12
}{}

\let\llangle\@undefined
\let\rrangle\@undefined
\DeclareMathDelimiter{\llangle}{\mathopen}{MnLargeSymbols}{'164}{MnLargeSymbols}{'164}
\DeclareMathDelimiter{\rrangle}{\mathclose}{MnLargeSymbols}{'171}{MnLargeSymbols}{'171}
\makeatother

\makeatletter
\newcommand\suchthat{%
 \@ifstar
  {\mathrel{}\middle|\mathrel{}}
  {\mid}%
}
\makeatother

\newcommand{\Ball}[2]{\bbB(#1,#2)}
\newcommand{\Sphere}[2]{\bbS(#1,#2)}

\newcommand{\unitball}[1]{\hat\bbB^{#1}}
\newcommand{\unitsphere}[1]{\hat\bbS^{#1}}

\newcommand{\unitballvolume}[1]{\lvert\unitball{#1}\rvert}
\newcommand{\unitspherearea}[1]{\lvert\unitsphere{#1}\rvert} 

\newcommand{\halfspace}{\bbH}

\newcommand{\Gnomonic}{\Phi}
\newcommand{\Normalization}{\widetilde\Phi}

\newcommand{\unitu}{\hat{u}}
\newcommand{\unitv}{\hat{v}}

\newcommand{\diameter}[1]{\delta_{#1}}
\newcommand{\height}[1]{\nu_{#1}}
\newcommand{\minheight}[1]{\nu_{#1}}
\newcommand{\Area}[1]{A_{#1}}
\newcommand{\volumesymbol}{V}
\newcommand{\volume}[1]{\volumesymbol_{#1}}
\newcommand{\outradius}[1]{\varrho_{#1}}
\newcommand{\inradius}[1]{r_{#1}}

\newcommand{\solidangle}{\Omega}

\newcommand{\fullness}[1]{\mu_{#1}}
\newcommand{\radiusratio}[1]{\beta_{#1}}
\newcommand{\thickness}[1]{\tau_{#1}}
\newcommand{\aspectratio}[1]{\alpha_{#1}}
\newcommand{\meanratio}[1]{\eta_{#1}}
\newcommand{\sineofminsolidangle}[1]{\sigma_{#1}}
\newcommand{\minsolidangle}[1]{\solidangle_{#1}}

\newcommand{\cone}{\operatorname{cone}}
\newcommand{\convex}{\operatorname{convex}}
\newcommand{\linhull}{\operatorname{span}}

\newcommand{\Cjung}[1]{C_{J,#1}}
\newcommand{\Cdiameter}[1]{C_{\delta,#1}}
\newcommand{\Cinradius}[1]{C_{r,#1}}
\newcommand{\Coutradius}[1]{C_{\varrho,#1}}

\newcommand{\Jacobian}{{\mathrm D}}

\newcommand{\Hausdorff}{\mathcal{H}}

\newcommand{\tsum}{{\textstyle{\sum}}}

\newcommand{\Id}{\operatorname{Id}}

\newcommand{\allones}{\bm{1}}
\newcommand{\allzeroes}{\bm{0}}
\DeclareSymbolFont{bbold}{U}{bbold}{m}{n}
\DeclareSymbolFontAlphabet{\mathbbold}{bbold}

\begin{document}

\title{On solid angles and simplex shape measures}

\author{Martin W. Licht}
\address{TU Dresden, Zellescher Weg 12-14, 01069 Dresden, Germany}
\email{martin\_werner.licht@tu-dresden.de}

\thanks{}

\subjclass[2020]{52A40, 52B11, 65N50}
\keywords{shape measure, simplex, solid angle}

\date{\today}

\begin{abstract}
    The minimum solid angle of a simplex is shown to be a shape measure in any dimension. 
    While this has been known in dimensions two and three, the higher-dimensional generalization is new. 
    We prove inequalities between the minimum solid angle of a non-degenerate simplex and its fullness.
    Up to dimension-dependent constants, 
    the fullness is bounded from above by the minimum solid angle, and it is bounded from below by the square of the minimum solid angle.
    The exponents are optimal for three and higher dimensions.
    This work therefore generalizes the minimum angle condition. 
    We also review several simplex shape measures and their relationships.
\end{abstract}

\maketitle

\section{Introduction}\label{section:intro}

How to measure the quality of triangulations has been studied extensively throughout the mathematical and quantitative sciences.
Intuitively, some triangles and tetrahedra are more \emph{regular} or \emph{well-shaped} than others.
\emph{Shape measures} formalize and quantify the quality of such shapes. 
Examples of such shape measures include the \emph{aspect ratio} and the \emph{fullness} of a simplex. 
Beyond triangles and tetrahedra, shape quality indicators evaluate simplices in higher dimensions.
In this note, we show that the \emph{minimum solid angle} is a simplicial shape measure, polynomially comparable to the aspect ratio and the fullness. While this has been known for triangles and tetrahedra, the generalization to simplices of arbitrary dimension seems to have been absent from the literature up to now.

Notions of shape measures appear throughout the mathematical literature.  
Shape measures associate a real number to a simplex, 
continuous in the coordinates of the vertices and invariant under reflection, rotation, translation, and positive scaling. They are also independent of the order of the vertices. 
For example, an $n$-dimensional simplex $S$ with diameter $\diameter{S}$, volume $\volume{S}$, and inradius $\inradius{S}$ has the \emph{aspect ratio} $\aspectratio{S} := \inradius{S} / \diameter{S}$ and the \emph{fullness} $\fullness{S} := \volume{S} / \diameter{S}^{n}$.
Clearly, the aspect ratio and the fullness are invariant under orthogonal transformations, translations, and positive rescaling of simplices.
If the simplex shows any type of degeneracy, then the shape measures deteriorate,
but if the simplex has a regular shape, then the shape measures remain at values close to the optimum.

In two dimensions, the minimum angle $\gamma_{\min}$ of a triangle $S$ is a shape measure. 
It is easy to verify that this shape measure is equivalent to the fullness, in the sense that  
\begin{align*} 
    \tfrac{1}{4} \gamma_{\min} 
    \leq 
    \fullness{S} 
    \leq 
    \tfrac{1}{2} \gamma_{\min}
    .
\end{align*}
It is less elementary how this shape measure generalizes to tetrahedra or higher-dimensional simplices. 
For example, 
neither a tetrahedron's minimum angle within the faces nor its minimum dihedral angle between faces along an edge would be adequate as a shape measure to detect all tetrahedron degeneracies; see Figure~\ref{figure:simplex-degeneracies}. 
The natural generalization of the minimum angle of a triangle is the \emph{minimum solid angle} at the vertices of a simplex.

The classical angle between two linearly independent unit vectors in the plane is the length of the shorter arc on the unit circle connecting them.
Solid angles generalize this idea to any dimension. 
The solid angle of three linearly independent unit vectors in three-dimensional Euclidean space is the surface area of the spherical triangle spanned by those three vectors. 
In general, the solid angle of $n$ vectors $x_1, \dots, x_{n}$ in $\bbR^{n}$ is the $(n-1)$-dimensional measure 
\begin{align*} 
	\solidangle( x_1, \dots, x_{n} )
	:=
    \Hausdorff^{n-1} ( \cone\{ x_1,\dots,x_{n} \} \cap \unitsphere{n-1} )
    ,
\end{align*}
where $\cone\{ x_1, \dots, x_{n} \} := \tsum_{i=1}^{n} \bbR_{0}^+ \cdot x_{i}$ is the convex cone spanned by $x_1, \dots, x_{n}$ and $\unitsphere{n-1}$ denotes the unit sphere of dimension $n-1$. 
If $x_1, \dots, x_{n}$ are linearly independent unit vectors, then their solid angle equals the measure of the spherical simplex with vertices $x_1, \dots, x_{n}$.
Given a simplex with vertices $v_{0}, \dots, v_n \in \bbR^{n}$, the solid angle at the vertex $v_{i}$ is defined as 
\begin{align*}
    \solidangle_{i} 
    := 
    \solidangle( 
        v_{0} - v_{i}, 
        \dots, 
        v_{i-1} - v_{i}, 
        v_{i+1} - v_{i}, 
        \dots, 
        v_{n} - v_{i} 
    ).
\end{align*}

We examine the minimum solid angle as a simplex shape measure in any dimension. 
Clearly, the solid angles remain invariant under reflection, rotation, translation, and positive scaling. 
Well-shaped simplices have large solid angles: the minimum solid angle is bounded from below by fullness of the simplex, up to a dimension-dependent constant.  
The converse has not been established yet in the literature for higher dimensions; 
it is easily seen in two dimensions, and Liu and Joe have have proven it in three dimensions~\cite{liu1994relationship}, a considerably more delicate inequality. 
Generalizing previous work, we show that simplices in any dimension with large solid angles are also well-shaped. 
We provide polynomial inequalities with explicit dimension-dependent constants and demonstrate that the exponents are optimal. 

The main result is the following theorem.

\begin{theorem}\label{intro:main-result}
    Let $S$ be an $n$-dimensional simplex, $n \geq 2$, with diameter $\diameter{S}$, volume $\volume{S}$, and minimum solid angle $\minsolidangle{S}$. 
    Write $\Gamma$ for the gamma function.
    Then 
    \begin{align}
        C_{n}^{-1} 
        \minsolidangle{S}^{2}
        \leq 
        \fullness{S}
        \leq 
        \tfrac{1}{n} \minsolidangle{S}
        ,
        \qquad 
        C_{n}
        := 
        n 
        \pi^{\frac{ n-1 }{ 2 }}
        \sum_{ \ell = 1 }^{n-2}
        \tbinom{ n-1 }{ \ell }^{2} %
        \frac{ \Gamma( \frac{ \ell+1 }{ 2 } ) \Gamma( \frac{ n - \ell }{ 2 } ) }{ \Gamma(\frac{n}{2})^{2} }
        +
        \frac{ 2 n \pi^{\frac n 2} }{ (n+1) \Gamma(\tfrac{n}{2}) }
        .
    \end{align}
    In the case $n=3$, we additionally have 
    \begin{align}
        \bigl( 12 + 3 \pi \bigr)^{-1}
        \minsolidangle{S}^{2}
        \leq 
        \fullness{S}
        \leq 
        \tfrac{1}{3} \minsolidangle{S}
        .
    \end{align}
    In dimension three or more, the exponents are optimal.
\end{theorem}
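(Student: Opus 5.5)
We prove the two inequalities separately and then construct degenerate families showing optimality.

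\emph{Upper bound $\fullness{S} \leq \tfrac1n \minsolidangle{S}$.} Fix a vertex $v_i$ realizing the minimum solid angle. Every other vertex is within distance $\diameter{S}$ of $v_i$, so $S$ lies in $\cone$ at $v_i$ intersected with the ball $\Ball{v_i}{\diameter{S}}$. The volume of this conical sector is $\int_0^{\diameter{S}} \solidangle_i\, t^{n-1}\,dt = \tfrac1n \solidangle_i \diameter{S}^n$. Dividing by $\diameter{S}^n$ gives the claim.

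\emph{Lower bound $C_n^{-1}\minsolidangle{S}^2 \leq \fullness{S}$.} We bound each solid angle from above by a multiple of the square root of the fullness. Translate so that $v_i = 0$, and let $x_j = v_j - v_i$. Let $F$ be the facet opposite $v_i$, with height $h_i$, so that $\volume{S} = \tfrac1n h_i \Area{F}$.

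The spherical simplex $\cone\{x_j\}\cap\unitsphere{n-1}$ is the image of $F$ under the radial projection $\Normalization$, which is essentially a gnomonic map. Its Jacobian at $y\in F$ is $h_i/|y|^n$, so
\[
\solidangle_i = \int_F h_i |y|^{-n}\,dy .
\]
The difficulty is that $|y|$ may be small compared with $\diameter{S}$. We only know $|y|\ge h_i$, and this alone gives a bound too weak to be polynomial.

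Instead we split the integral according to the lower-dimensional faces of $F$ near which $y$ lies. This should produce the sum over $\ell$, with the term $\binom{n-1}{\ell}^2$ counting face pairs and the gamma quotients arising as volumes of unit balls and spheres in the relevant dimensions. Concretely, we will decompose the cone by which coordinates dominate and bound each piece by a product of an $\ell$-dimensional and an $(n-1-\ell)$-dimensional spherical integral. The goal is an estimate of the form
\[
\solidangle_i^2 \le C_n\, \volume{S}/\diameter{S}^n .
\]

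An alternative route that might be cleaner is to compare $\solidangle_i$ with the sine of the minimum solid angle. Since $\solidangle_i \lesssim \sqrt{\sin}$-type quantities, and $\sin\solidangle$ equals $|\det(x_j)|/\prod|x_j|$, one can bound this quantity by $n!\,\volume{S}/(\min|x_j|)^{\cdots}$ and then use the other vertices to control short edges. The boundary term $\tfrac{2n\pi^{n/2}}{(n+1)\Gamma(n/2)}$ suggests a separate estimate for the region near the whole facet. This estimate is where I expect the main obstacle: controlling $\int_F |y|^{-n}$ uniformly when the simplex is flat in some directions but not others. The exponent $2$ enters through a Cauchy--Schwarz/AM--GM step that balances small height against large angular spread.

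For $n=3$ we redo the integral explicitly, using the formula for the area of a spherical triangle (via L'Huilier or Van Oosterom--Strackee), which yields $12+3\pi$.

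\emph{Optimality for $n\ge3$.} For the upper exponent, take a needle-like simplex: fix a regular $(n-1)$-simplex of size $\epsilon$ and a vertex at distance $1$. One checks that both $\fullness{S}$ and $\minsolidangle{S}$ are of order $\epsilon^{n-1}$, up to constants. For the lower exponent, take a flat "cap-like" simplex, for instance a regular simplex with one vertex pushed to height $\epsilon$ above the centroid of the opposite facet. Then $\fullness{S}\sim\epsilon$, while every solid angle is $\gtrsim\sqrt{\epsilon}$: the apex angle is close to a half-sphere, and the base angles are wedges of dihedral opening about $\epsilon$ times a cross-section. The computation must be tuned (for example, a sliver-type configuration in $3$D) so that $\minsolidangle{S}\sim\epsilon^{1/2}$. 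This shows the square cannot be improved.
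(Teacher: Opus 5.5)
\textbf{The lower bound, which is the substance of the theorem, is not proved, and your plan aims at an estimate that is false.} Your upper bound, and the needle example showing that the linear bound is sharp, are fine and match the paper.

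You propose to bound ``each solid angle'' by a multiple of $\sqrt{\fullness{S}}$, i.e.\ $\solidangle_i^2\le C_n\volume{S}/\diameter{S}^n$ at whichever vertex you translate to the origin. This fails in general. In a flat pyramid the apex angle tends to $\unitspherearea{n-1}/2$ while $\fullness{S}\to0$. In a needle the base-vertex angles stay bounded below while $\fullness{S}\sim\epsilon^{n-1}$. Since a diameter of the needle joins the apex to a base vertex, the estimate fails even at an endpoint of a diameter.

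Nothing in your sketch identifies the minimizing vertex. The decomposition ``by which coordinates dominate'' is read off from the shape of $C_n$ rather than derived. The claim that L'Huilier's formula yields $12+3\pi$ is unsupported.

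The missing idea is to bound the \emph{product} $\solidangle_0\solidangle_n$ of the angles at the two endpoints of a diameter. Take $\diameter{S}=1$, $v_0=0$, $v_n=e_n$; then every other vertex is $(f_i,\theta_i)$ with $0<\theta_i<1$. Gnomonic projection onto hyperplanes orthogonal to the diameter writes $\solidangle_0$ and $\solidangle_n$ as integrals of $(1+\|z\|^2)^{-n/2}$ over $\convex\{0,f_1/\theta_1,\dots,f_{n-1}/\theta_{n-1}\}$ and $\convex\{0,f_1/(1-\theta_1),\dots,f_{n-1}/(1-\theta_{n-1})\}$. Both are rescalings of one simplex $F=\convex\{0,f_1,\dots,f_{n-1}\}$ with $\Hausdorff^{n-1}(F)=n\volume{S}$.

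For each partition $I_A\cup I_B$ of $\{1,\dots,n-1\}$, integrating out $\linhull\{f_i : i\in I_B\}$ bounds $\solidangle_0$ by a Gamma quotient times $\prod_{i\in I_A}\theta_i^{-1}$ times a projected volume of $\convex\{0,f_i : i\in I_A\}$. The symmetric bound holds for $\solidangle_n$. Cavalieri then yields $\solidangle_0\solidangle_n\lesssim\volume{S}\big/\bigl(\prod_{i\in I_A}\theta_i\prod_{i\in I_B}(1-\theta_i)\bigr)$. Averaging over partitions with the weights $\prod_{i\in I_A}\theta_i\prod_{i\in I_B}(1-\theta_i)$, which sum to $1$, removes the $\theta_i$ and produces $\tbinom{n-1}{\ell}^2$ and the Gamma factors. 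The two extremal partitions instead use $\minsolidangle{S}\le\unitspherearea{n-1}/(2n+2)$, which gives the last term of $C_n$.

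The exponent $2$ arises because each angle only sees ``one leg'' of $F$ and only the product sees its full volume; it does not come from a Cauchy--Schwarz step. For $n=3$ the same average has four terms and gives $\bigl(\tfrac{\pi}{2}+\tfrac{\pi}{2}+2+2\bigr)\Hausdorff^2(F)=(12+3\pi)\volume{S}$. The $2$'s come from the strip bound $\solidangle_0\le 2\theta_1^{-1}\|f_1\|\sin\alpha/(1+\sin\alpha)$ in the mixed cases, where $\alpha$ is the angle between $f_1$ and $f_2$.

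\textbf{Your optimality example for the quadratic exponent is also wrong.} In the cap simplex the base-vertex angles are thin wedges of size $\sim\epsilon$, as you note yourself. So $\minsolidangle{S}\sim\epsilon\sim\fullness{S}$, which is the linear regime and says nothing about the exponent $2$. You need a simplex in which every solid angle is of order $\sqrt{\fullness{S}}$. One example is $v_0=0$, $v_k=\epsilon e_k$ for $1\le k\le n-2$, and $v_{n-1},v_n=e_{n-1}\mp\epsilon^{n-2}e_n$, where $\fullness{S}\sim\epsilon^{2(n-2)}$ and all angles are $\sim\epsilon^{n-2}$. A three-dimensional sliver alone does not cover $n\ge4$. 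Also, $|\det(x_j)|/\prod_j\|x_j\|$ is the polar sine, not $\sin\solidangle_i$.
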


We briefly review how shape measures are used in different areas of mathematics, engineering, and natural sciences, both in application-oriented and theoretical endeavors.

Some shape measures appear throughout geometric analysis and differential geometry.
The \emph{fullness} of an $n$-dimensional simplex compares its $n$-dimensional measure against the $n$-th power of its diameter~\cite{whitney1957geometric}, 
and the \emph{thickness} of a simplex is the ratio of its smallest height to its diameter~\cite{munkres1966elementary,dyer2015riemannian}. 

Several shape measures have been proposed throughout the literature of computational geometry and numerical analysis 
to quantify the quality of simplices~\cite{liu1994relationship,dompierre2005analysis}. 
One example is the ratio of the diameter to the radius of the largest inscribed sphere of a simplex, sometimes known as the \emph{chunkiness parameter}~(see~\cite[p.~97, Definition~(4.2.16)]{brenner2008mathematical} and~\cite[Definition~11.2]{ernguermond1}). 
\emph{Algebraic shape measures} are very natural mesh quality indicators for the error analysis of numerical methods: 
simplices are associated with the affine transformations from regular reference simplices, 
and their shape quality can therefore be assessed through the properties of the corresponding Jacobian matrices. 
For example, the spectral condition number of such an affine transformation's Jacobian is a shape measure. 
While the minima for either face angles or for dihedral angles in a tetrahedron alone are not adequate shape measures, 
the minima of both face and dihedral angles is a shape measure~\cite{brandts2008equivalence}. 
Naturally, none of these shape measures depend on the orthogonal coordinate system and the units of distance.

Tetrahedral meshes are widely used in the computational solution of partial differential equations by the finite element method, which is a staple in computational engineering. 
Ensuring that the tetrahedra are well-shaped is a central problem for the initial mesh generation and for driving adaptive mesh refinement:
poorly shaped tetrahedra can adversely affect the discretization errors of the numerical methods, the conditioning of numerical problems, and the accuracy and stability of the numerical solvers. 
We refer to the literature for overviews on triangular and tetrahedral shape measures~\cite{dompierre2005analysis,brandts2008equivalence,coxeter_triangulations}.
Beyond three dimensions, space-time methods~\cite{neumuller2011refinement,steinbach2015space} posed on four-dimensional simplicial meshes give us reason to study shape measures in higher dimensions. 
This manuscript establishes that a family of simplicial meshes in any dimension is shape-regular if and only if the minimum solid angles among all the simplices have a common lower bound.

The notion of solid angle has been known at least since Euclid's \emph{Elements}~\cite{Negrepontis2026}.
The connection to spherical geometry in three or more dimensions has been established over the past centuries~\cite{Girard1629,euler1781mensura,schlaefli,SEN_1930}.
Formulas for solid angles exist in the three-dimensional case, yet no exact elementary expressions in terms of the vector coordinates are known in higher dimensions~\cite{eriksson1990measure,hajja_walker,ribando2006measuring}. 
Solid angles can be expressed in terms of integrals of multivariate Gaussian probability distributions over the positive orthant. 
Whilst not as commonplace as planar angles, solid angles have remained a staple in several disciplines of engineering and science.
The minimum angle is easily seen to be a shape measure for triangles, 
and the minimum solid angle is a shape measure for tetrahedra~\cite{liu1994relationship,brandts2009equivalence}.
We generalize these observations to arbitrary dimension.

\begin{figure}[t]
    \centering

    \begin{tikzpicture}[
        scale=4.0,
        line cap=round,
        line join=round,
        vertex/.style={circle,fill,inner sep=1.2pt},
        every node/.style={font=\small}
    ]
        \begin{scope}[yshift=0.50cm]
            \coordinate (A) at (0,0);
            \coordinate (B) at (0,0.30);
            \coordinate (C) at (1,0);

            \draw (A)--(B)--(C)--cycle;
            \node[vertex] at (A) {};
            \node[vertex] at (B) {};
            \node[vertex] at (C) {};

            \draw[dotted,<->,shorten <=2pt,shorten >=2pt] (-0.10,0) -- (-0.10,0.30) node[midway,left] {$\epsilon$}; 
        \end{scope}
        \begin{scope}
            \coordinate (A) at (0,0);
            \coordinate (B) at (1,0);
            \coordinate (C) at (0.5,0.30);

            \draw (A)--(B)--(C)--cycle;
            \node[vertex] at (A) {};
            \node[vertex] at (B) {};
            \node[vertex] at (C) {};

            \draw[dotted,<->,shorten <=2pt,shorten >=2pt] (0.50,0) -- (0.50,0.30) node[midway,right] {$\epsilon$}; %
        \end{scope}
    \end{tikzpicture}
    \hspace{1cm}
    \begin{tikzpicture}[
        x={(1.00cm,0.00cm)},
        y={(0.50cm,0.30cm)},
        z={(0cm,1cm)},
        scale=1.15,
        line cap=round,
        line join=round,
        vertex/.style={circle,fill,inner sep=1.1pt},
        every node/.style={font=\small}
    ]
        \begin{scope} 
            \def\needleangle{10}
            \def\needleheight{3.0}
            \def\baseradius{0.60}

            \coordinate (A) at ({\baseradius*cos(\needleangle    )},{\baseradius*sin(\needleangle    )},0);
            \coordinate (B) at ({\baseradius*cos(\needleangle+120)},{\baseradius*sin(\needleangle+120)},0);
            \coordinate (C) at ({\baseradius*cos(\needleangle+240)},{\baseradius*sin(\needleangle+240)},0);
            \coordinate (D) at (0,0,\needleheight);

            \draw (A)--(B)--(C)--cycle;
            \draw (A)--(D)--(B);
            \draw (C)--(D);

            \draw[dotted,<->,shorten <=2pt,shorten >=2pt] ($(A)+(1pt,-4pt)$) -- ($(C)+(1pt,-4pt)$) node[midway,below] {$\epsilon$};

            \foreach \P in {A,B,C,D} \node[vertex] at (\P) {};

        \end{scope}
    \end{tikzpicture}
    \hspace{1cm}
    \begin{tikzpicture}[
        x={(1.00cm,0.00cm)},
        y={(0.50cm,0.30cm)},
        z={(0cm,1cm)},
        scale=1.15,
        line cap=round,
        line join=round,
        vertex/.style={circle,fill,inner sep=1.1pt},
        every node/.style={font=\small}
    ]

        \begin{scope}[yshift=1.00cm]
            \def\baseradius{1.70}
            \def\flatteningangle{140}
            \def\flatteningheight{0.8}
            
            \coordinate (A) at ({\baseradius*cos(\flatteningangle    )},{\baseradius*sin(\flatteningangle    )},0);
            \coordinate (B) at ({\baseradius*cos(\flatteningangle+120)},{\baseradius*sin(\flatteningangle+120)},0);
            \coordinate (C) at ({\baseradius*cos(\flatteningangle+240)},{\baseradius*sin(\flatteningangle+240)},0);
            \coordinate (D) at (0,0,\flatteningheight);

            \draw (A)--(B)--(C)--cycle;
            \draw (A)--(D)--(B);
            \draw (C)--(D);

            \foreach \P in {A,B,C,D}
                \node[vertex] at (\P) {};

            \draw[dotted,<->,shorten <=2pt,shorten >=2pt] (0,0,0)--(D) node[midway,right] {$\epsilon$};

        \end{scope}

        \begin{scope}[yshift=-1.00cm]
            \def\baseradius{2.00}
            \def\wedgeangle{135}
            \def\wedgeheight{0.2}
            
            \coordinate (A) at ({\baseradius*cos(\wedgeangle    )},{\baseradius*sin(\wedgeangle    )}, \wedgeheight);
            \coordinate (B) at ({\baseradius*cos(\wedgeangle+ 90)},{\baseradius*sin(\wedgeangle +90)},-\wedgeheight);
            \coordinate (C) at ({\baseradius*cos(\wedgeangle+180)},{\baseradius*sin(\wedgeangle+180)}, \wedgeheight);
            \coordinate (D) at ({\baseradius*cos(\wedgeangle+270)},{\baseradius*sin(\wedgeangle+270)},-\wedgeheight);

            \draw (A)--(B);
            \draw (B)--(C);
            \draw (C)--(D);
            \draw (D)--(A);

            \draw (A)--(C);
            \draw (B)--(D);

            \foreach \P in {A,B,C,D} \node[vertex] at (\P) {};

            \draw[dotted,<->] (0,0, \wedgeheight) -- (0,0,-\wedgeheight) node[midway,left] {$\epsilon$};
        \end{scope}
    \end{tikzpicture}
    \caption{
        Representative examples of simplices degenerating as $\epsilon \to 0$. 
        Left: triangles degenerate to a line. 
        Middle: a needle tetrahedron degenerating to a single line.
        Right: tetrahedra degenerating to a triangle (top, pyramid) or a quadrilateral (bottom, sliver). 
        Tetrahedra may degenerate even if dihedral or face angles remain bounded from above and below, but the degeneracy is detected by the minimum solid angle.
    }
    \label{figure:simplex-degeneracies}
\end{figure}

The remainder of this manuscript has the following structure. 
Section~\ref{section:shapemeasures} reviews common simplex shape measures and relationships between them.
Section~\ref{section:solidangles} defines solid angles and discusses the gnomonic projection. 
Section~\ref{section:main-result} shows that the minimum solid angle is a shape measure of simplices in dimension three and higher.

\subsection*{Notation}

When the dimension $n$ is understood, $e_1, \dots, e_{n} \in \bbR^{n}$ are the standard orthonormal basis vectors. 
Write $\allones_{n}$ and $\allzeroes_{n}$ for the $n$-dimensional vectors whose entries are all equal to $1$ or $0$, respectively.
$\Id_{m \times n}$ is the $m \times n$ matrix whose diagonal entries equal $1$ and whose off-diagonal entries are zero. Set $\Id_{n} := \Id_{n \times n}$. 

Given $x \in \bbR^{n}$ and $r > 0$, 
let $\Ball{x}{r}$ be the $n$-dimensional open ball around $x$ of radius $r$,
and let $\Sphere{x}{r}$ be the sphere of dimension $n-1$ around $x$ of radius $r$.
Write $\unitball{n} := \Ball{\allzeroes_{n}}{1}$ and $\unitsphere{n-1} := \Sphere{\allzeroes_{n}}{1}$ for the unit ball and the unit sphere in $\bbR^{n}$, respectively.
Write $\unitballvolume{n} := \Hausdorff^{n}(\unitball{n})$ 
and $\unitspherearea{n-1} := \Hausdorff^{n-1}(\unitsphere{n-1})$ 
for their respective measures. 
We adopt the convention that the zero-dimensional measure of a point is one, and that the measures are understood in the Hausdorff sense.

Set $\bbR^+ := (0,\infty)$ and $\bbR_{0}^+ := [0,\infty)$. If $a_1, \dots, a_m$ are vectors, then $\linhull\{ a_1, \dots, a_m \}$ is their linear hull, $\cone\{ a_1, \dots, a_{m} \} := \tsum_{i=1}^{m} \bbR_{0}^+ \cdot a_{i}$ denotes their cone (conal hull), and $\convex\{ a_1, \dots, a_m \}$ is their convex hull.

\section{Simplices and shape measures}\label{section:shapemeasures}

A \emph{simplex} of dimension $n$ is the convex hull of $n+1$ affinely independent points within Euclidean space. 
These $n+1$ points are the \emph{vertices} of the simplex. 
By definition, no vertex is a convex combination of the others.
A simplex is called \emph{regular} if all its edges have the same length.
We henceforth assume that $n \geq 1$.

Suppose that $S$ is an $n$-dimensional simplex with vertices $v_{0}, \dots, v_{n}$. 
We write $\diameter{S}$ for its \emph{diameter}, and we write $\volume{S}$ for its $n$-dimensional \emph{volume}. 
The \emph{outer radius} $\outradius{S}$ of the simplex $S$ is the minimum radius of any closed ball containing the simplex\footnote{We distinguish the outer radius of $S$ from its \emph{circumradius}, which is the radius of the unique sphere in the affine hull of $S$ that includes all its vertices.}. 
The \emph{inradius} $\inradius{S}$ of the simplex $S$ is the maximum radius of any closed ball (within the affine hull of $S$) contained in the simplex. 
The \emph{height} of the $i$-th vertex is the distance from $v_{i}$ to the affine hull of $v_{0}, \dots, v_{i-1}, v_{i+1}, \dots, v_{n}$. 
Let $\minheight{S}$ be the smallest among the heights of the $n+1$ vertices. 

We work with several shape measures. 
The \emph{aspect ratio} $\aspectratio{S}$, the \emph{thickness} $\thickness{S}$, the \emph{radius ratio} $\radiusratio{S}$, and the \emph{fullness} $\fullness{S}$ are 
\begin{align*}
    \aspectratio{S}  := \frac{ \inradius{S} }{ \diameter{S} }
    ,
    \qquad 
    \thickness{S}  := \frac{ \minheight{S} }{ \diameter{S} }
    ,
    \qquad 
    \radiusratio{S}  := \frac{ \inradius{S} }{ \outradius{S} }
    ,
    \qquad 
    \fullness{S} := \frac{ \volume{S} }{ \diameter{S}^{n} }.
\end{align*}
All these measures describe how close a simplex is to being degenerate. 
To compare these measures, we first recall several elementary geometric identities.
For tighter inequalities, some extremal properties of the regular simplex are collected.

\begin{proposition}\label{proposition:extremal-properties}
    Let $S$ be an $n$-dimensional simplex.
    \begin{itemize}
    \item 
    If $\diameter{S} = 1$, then 
    $\outradius{S} \leq \Cjung{n} := \sqrt{\frac{n}{2(n+1)}}$ 
    and 
    $\volume{S} \leq \Cdiameter{n} := \frac{1}{n!} \sqrt{\frac{n+1}{2^{n}}}$. 
    \item 
    If $\inradius{S} = 1$, then 
    $\volume{S} \geq \Cinradius{n} := \frac{\sqrt{n}^{n}\sqrt{n+1}^{n+1}}{n!}$.
    \item 
    If $\outradius{S} = 1$, then 
    $\volume{S} \leq \Coutradius{n} := \frac{\sqrt{n+1}^{n+1}}{n!\sqrt{n}^{n}}$.
    \end{itemize}
    In each case, equality holds if $S$ is regular. 
\end{proposition}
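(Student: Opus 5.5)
These are the classical extremal properties of the regular simplex; the plan is to reduce each bullet to a known inequality rather than reprove it from scratch. For the first bullet, the bound on $\outradius{S}$ is Jung's theorem. I would sketch it as follows. Let $c$ be the center of the smallest enclosing ball, with radius $\varrho=\outradius{S}$. Then $c$ lies in the convex hull of the vertices $v_i$ with $|v_i-c|=\varrho$; there are at most $n+1$ of these, say $c=\sum\lambda_i v_i$ with $\sum\lambda_i=1$ and $\lambda_i\geq 0$. Expanding
\begin{align*}
    0=\Bigl|\sum\lambda_i(v_i-c)\Bigr|^2
    =\sum_{i,j}\lambda_i\lambda_j\bigl(\varrho^2-\tfrac12|v_i-v_j|^2\bigr)
\end{align*}
gives $\varrho^2\leq\tfrac12\sum_{i\neq j}\lambda_i\lambda_j\,\diameter{S}^2$. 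Since $\sum_{i\neq j}\lambda_i\lambda_j=1-\sum\lambda_i^2\leq 1-\tfrac{1}{n+1}$, this yields $\varrho^2\leq \tfrac{n}{2(n+1)}\diameter{S}^2$.

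For the volume bound with $\diameter{S}=1$, I would invoke the isodiametric-type result for simplices: among simplices of given diameter, the regular one has maximal volume. One route is to place $S$ in its enclosing ball of radius $\Cjung{n}$ and apply the third bullet, but that only gives $\Coutradius{n}\Cjung{n}^n$. Checking, $\Coutradius{n}\Cjung{n}^n=\frac{\sqrt{n+1}^{n+1}}{n!\sqrt n^n}\cdot\frac{\sqrt n^n}{\sqrt{2(n+1)}^n}=\frac{1}{n!}\sqrt{\frac{n+1}{2^n}}$, which is exactly $\Cdiameter{n}$. So the second half of the first bullet follows from Jung's bound together with the third bullet, since volume scales with the $n$-th power of $\outradius{S}$.

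For the third bullet (maximal volume inscribed in a ball), I would argue by a symmetrization/compactness argument. A maximizer exists by compactness, and we may take all its vertices on the sphere. For a maximizer, moving vertex $v_i$ on the sphere must not increase its height over the opposite facet. Hence $v_i$ lies on the line through the center perpendicular to that facet, so the center projects to the circumcenter of every facet. Inducting on the dimension, each facet is then a regular maximal simplex in its own sphere, which forces all edges to be equal. The volume of the regular simplex with circumradius $1$ is computed directly: it has edge length $\sqrt{2(n+1)/n}$, and its volume is $\frac{\sqrt{n+1}}{n!\,2^{n/2}}\ell^n$.

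For the second bullet I would use the facet areas $A_i$. We have $\volume{S}=\frac1n\inradius{S}\sum A_i$, and the sum of facet areas of a simplex circumscribed about the unit ball is minimized by the regular simplex. A cleaner route is via the polar: write $S=\{x:\langle x,u_i\rangle\leq 1\}$ with unit normals $u_i$ (incenter at the origin, $\inradius{S}=1$). The volume is a convex-analytic function of the $u_i$ under the constraint $\sum a_iu_i=0$. I expect this bullet to be the main obstacle: minimizing volume over circumscribed simplices needs an extremality argument, e.g.\ the polar simplex is inscribed in the unit ball, combined with the Blaschke–Santaló-type identity $\volume{S}\cdot\volume{S^\circ}\geq$ const for simplices, which is exact for simplices centered at the incenter. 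Failing that, I would cite the standard literature statement (e.g.\ Fejes Tóth) for the equality cases.

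Finally, equality in each bullet for the regular simplex is checked by direct computation of its inradius $\ell/\sqrt{2n(n+1)}$, circumradius $\ell\sqrt{n/(2(n+1))}$, and volume $\frac{\ell^n}{n!}\sqrt{\frac{n+1}{2^n}}$.
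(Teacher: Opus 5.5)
Your route differs from the paper's. The paper proves this proposition purely by citation: Jung's theorem for $\outradius{S}$, and the literature on extremal simplices for each volume bound. You instead give the standard barycentric proof of Jung's inequality, which is correct. You also observe that $\Cdiameter{n}=\Cjung{n}^{n}\Coutradius{n}$, so the diameter--volume bound is not an independent isodiametric fact: it follows from Jung's inequality and the third bullet by scaling.

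The second bullet, which you leave vague, reduces to the third bullet in the same spirit. Let $T$ be a simplex containing the origin in its interior, and let $\lambda_0,\dots,\lambda_n$ be the barycentric coordinates of the origin in $T$. Then $\Hausdorff^{n}(T)\,\Hausdorff^{n}(T^{\circ})=\bigl((n!)^{2}\prod_{i}\lambda_i\bigr)^{-1}\geq (n+1)^{n+1}/(n!)^{2}$ by AM--GM. Place the incenter of $S$ at the origin with $\inradius{S}=1$. The polar $S^{\circ}$ is then the convex hull of the unit facet normals, so it lies in the closed unit ball. The third bullet gives $\Hausdorff^{n}(S^{\circ})\leq\Coutradius{n}$, hence $\volume{S}\geq (n+1)^{n+1}/\bigl((n!)^{2}\Coutradius{n}\bigr)=\Cinradius{n}$.

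Your approach therefore gives a self-contained proof in which all three volume bounds rest on Jung's inequality and the single fact that the regular simplex maximizes volume inside a ball. The paper's approach gains brevity and inherits the equality statements from its references.

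Two details need correcting.

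First, the volume-product bound you need is a lower bound, the opposite direction to Blaschke--Santal\'o. Its equality case is when the origin is the centroid of $S^{\circ}$, not whenever it is the incenter of $S$; at the incenter the $\lambda_i$ are proportional to the facet areas of $S$.

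Second, in the third bullet, the claim that the center projects to the circumcenter of every facet holds for every inscribed simplex, so it is not what maximality yields. What maximality yields is that $v_i$ lies on the axis of the circumsphere of the opposite facet, so $v_i$ is equidistant from all other vertices. Applying this at every vertex makes all edges equal immediately, with no induction. The induction as you phrase it fails at $n=2$, where the facets are segments and their regularity carries no information.
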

\begin{proof}
    Jung's inequality~\cite{jung1901kleinste} states that $n$-simplices with unit diameter have outer radius at most $\Cjung{n}$. 
    The other inequalities are known extremal properties of the regular simplex;
    see~\cite[Remark 1.2]{klein_wessler}, %
   ~\cite[Lemma~11]{coxeter_triangulations}, %
   ~\cite[p.~438]{brass_moser_pach}, %
   ~\cite[Theorem~2]{basit_langi}, %
   ~\cite[Theorem~2.2]{gerber1975orthocentric}. %
\end{proof}

It is clear that $\outradius{S} \leq \diameter{S} \leq 2 \outradius{S}$, but the comparison of the outer radius and the diameter can be improved via Jung's theorem:
\begin{gather}\label{math:diameter-vs-outradius}
    \Cjung{n}^{-1} \outradius{S} 
    \leq \diameter{S} \leq 2 \outradius{S}.
\end{gather}

Let $\height{0}, \dots, \height{n}$ denote the heights of the vertices.
Let $\Area{0}, \dots, \Area{n}$ be the measures of the codimension-one faces opposite the respective vertices. 
Recall 
\begin{gather}\label{math:volume-from-height-and-face}
    \volume{S} = \frac{\Area{i} \height{i}}{n}
    .
\end{gather}
The largest ball contained within $S$ touches the codimension-one faces tangentially. 
Its center is called the \emph{incenter} and is equidistant to the affine hulls of the codimension-one faces.
Applying~\eqref{math:volume-from-height-and-face} to each simplex in the simplicial subdivision centered at the incenter, we find 
\begin{align*}
    \volume{S} = \sum_{i=0}^{n} \frac{ \inradius{S} \Area{i} }{n} = \inradius{S} \sum_{i=0}^{n} \frac{ \Area{i} }{n} = \inradius{S} \sum_{i=0}^{n} \frac{ \volume{S} }{ \height{i} }
    .
\end{align*}
This shows that $\inradius{S}$ equals the harmonic mean of the $n+1$ vertex heights divided by $n+1$:
\begin{gather}\label{math:inradius-via-heights-formula}
    \inradius{S} = \left( \sum_{i=0}^{n} \frac{1}{\height{i}} \right)^{-1}
    .
\end{gather}
In particular, $\inradius{S} \leq \minheight{S}$. 
However, the reciprocal heights $\height{i}^{-1}$ are the norms of the gradients of the barycentric coordinates of the simplex, and the sum of those gradients equals zero. Therefore, $\minheight{S}^{-1}$ is at most the sum of the other reciprocal heights. Therefore,
\begin{align*}
    \minheight{S}^{-1} \leq \inradius{S}^{-1} - \minheight{S}^{-1}
    .
\end{align*}
We conclude that 
\begin{gather}\label{math:inradius-vs-minheight} %
    \frac{1}{n+1} \minheight{S} \leq \inradius{S} \leq \frac{1}{2} \minheight{S} 
    .
\end{gather}
This naturally leads to our first set of comparisons between shape measures:
\begin{proposition}
    We have 
    \begin{gather*}
        \frac{1}{2}
        \thickness{S}
        \geq
        \aspectratio{S}
        \geq
        (n+1)^{-1} \thickness{S}
        ,
        \qquad 
        \Cjung{n} \, \radiusratio{S}
        \geq
        \aspectratio{S}
        \geq
        \frac{1}{2} \radiusratio{S}
        ,
        \qquad 
        \thickness{S}
        \geq
        \radiusratio{S}
        \geq
        \frac{\Cjung{n}^{-1}}{n+1} \, \thickness{S}
        .
    \end{gather*}
\end{proposition}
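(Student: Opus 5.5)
The plan is to divide the defining ratios by the common diameter $\diameter{S}$ and then apply the two estimates established just before the statement. These are the comparison \eqref{math:inradius-vs-minheight} between inradius and minimum height, and Jung's comparison \eqref{math:diameter-vs-outradius} between diameter and outer radius. We treat the three double inequalities in turn, and each reduces to one of these two facts or to their composition.

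\emph{First pair.} Since $\aspectratio{S} = \inradius{S}/\diameter{S}$ and $\thickness{S} = \minheight{S}/\diameter{S}$, dividing \eqref{math:inradius-vs-minheight} by $\diameter{S}$ gives
\begin{align*}
\tfrac{1}{n+1}\thickness{S} \leq \aspectratio{S} \leq \tfrac12 \thickness{S}.
\end{align*}

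\emph{Second pair.} We have $\aspectratio{S} = \radiusratio{S}\cdot \outradius{S}/\diameter{S}$. By \eqref{math:diameter-vs-outradius}, the factor satisfies
\begin{align*}
\tfrac12 \leq \outradius{S}/\diameter{S} \leq \Cjung{n}.
\end{align*}
This immediately yields $\tfrac12\radiusratio{S} \leq \aspectratio{S} \leq \Cjung{n}\radiusratio{S}$.

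\emph{Third pair.} Write $\radiusratio{S} = \inradius{S}/\outradius{S}$. For the upper bound, use $\inradius{S} \leq \tfrac12\minheight{S}$ and $\outradius{S} \geq \tfrac12\diameter{S}$. This gives $\radiusratio{S} \leq \minheight{S}/\diameter{S} = \thickness{S}$. For the lower bound, use $\inradius{S} \geq \minheight{S}/(n+1)$ and $\outradius{S} \leq \Cjung{n}\diameter{S}$. This gives
\begin{align*}
\radiusratio{S} \geq \frac{\Cjung{n}^{-1}}{n+1}\thickness{S}.
\end{align*}

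There is no real obstacle. The only care needed is to check that each inequality is combined in the correct direction, noting that all quantities are positive. Jung's inequality enters through Proposition~\ref{proposition:extremal-properties} after rescaling to unit diameter, and it has already been recorded in \eqref{math:diameter-vs-outradius}.
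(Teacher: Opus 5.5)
Your proposal is correct and matches the paper's own argument. The paper states this proposition immediately after \eqref{math:inradius-vs-minheight} as a direct consequence of that estimate together with Jung's comparison \eqref{math:diameter-vs-outradius}, and you combine exactly these two facts, in the correct directions, for each of the three double inequalities.
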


It is clear that $\volume{S}$ is bounded from below by the volume of its inscribed ball with radius $\inradius{S}$ and from above by the volume of the enclosing ball with radius $\outradius{S}$. %
Proposition~\ref{proposition:extremal-properties} gives the tighter bounds 
\begin{gather}
    \Cinradius{n} \inradius{S}^{n} \leq \volume{S} \leq \Coutradius{n} \outradius{S}^{n}. \label{math:inradius-volume-outradius}
\end{gather}
Using~\eqref{math:inradius-via-heights-formula} and Proposition~\ref{proposition:extremal-properties} on the codimension-one faces, 
we obtain the inequality 
\begin{align*} 
    \volume{S} 
    = 
    \inradius{S} 
    \sum_{i=0}^{n} 
    \frac{ \Area{i} }{n} 
    \leq 
    \frac{n+1}{n} 
    \Cdiameter{n-1} 
    \inradius{S} 
    \diameter{S}^{n-1} 
    .
\end{align*}
The height of any vertex is bounded by the same vertex's height in any proper subsimplex containing it. 
Therefore, using~\eqref{math:volume-from-height-and-face} recursively leads to 
\begin{align*}
    \frac{1}{n!} \minheight{S}^{n} \leq \volume{S} \leq \frac{\Cdiameter{n-1}}{n} \minheight{S} \diameter{S}^{n-1}
    .
\end{align*}
We state our second set of shape measure comparisons. 
\begin{proposition}
    We have 
    \begin{gather*}
        \frac{n+1}{n} \Cdiameter{n-1}
        \aspectratio{S}
        \geq
        \fullness{S}
        \geq
        \Cinradius{n}
        \aspectratio{S}^{n}
        ,
        \qquad 
        \frac{\Cdiameter{n-1}}{n}
        \thickness{S}
        \geq
        \fullness{S}
        \geq
        \frac{1}{n!} \, \thickness{S}^{n}
        ,
        \qquad
        \frac{n+1}{n} \Cdiameter{n-1} \Cjung{n}
        \radiusratio{S}
        \geq
        \fullness{S}
        \geq
        \frac{\Cinradius{n}}{2^{n}}
        \radiusratio{S}^{n}
        .
    \end{gather*}
\end{proposition}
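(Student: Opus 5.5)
The proposition consists of three pairs of inequalities, and each pair follows by dividing inequalities already established in this section by the appropriate power of $\diameter{S}$. The plan is to treat each pair in turn, and only the third pair needs an additional ingredient, namely~\eqref{math:diameter-vs-outradius}.

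\emph{Aspect ratio.} For the upper bound I use the displayed estimate $\volume{S} \leq \frac{n+1}{n}\Cdiameter{n-1}\inradius{S}\diameter{S}^{n-1}$. It comes from~\eqref{math:inradius-via-heights-formula}, written as $\volume{S} = \inradius{S}\sum_i \Area{i}/n$, combined with Proposition~\ref{proposition:extremal-properties} applied to each facet. Each facet has diameter at most $\diameter{S}$, so $\Area{i} \leq \Cdiameter{n-1}\diameter{S}^{n-1}$. Dividing by $\diameter{S}^{n}$ gives $\fullness{S} \leq \frac{n+1}{n}\Cdiameter{n-1}\aspectratio{S}$. For the lower bound, the left half of~\eqref{math:inradius-volume-outradius} gives $\volume{S} \geq \Cinradius{n}\inradius{S}^{n}$, and dividing by $\diameter{S}^{n}$ yields $\fullness{S} \geq \Cinradius{n}\aspectratio{S}^{n}$.

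\emph{Thickness.} Both bounds come from the displayed estimate $\frac{1}{n!}\minheight{S}^{n} \leq \volume{S} \leq \frac{\Cdiameter{n-1}}{n}\minheight{S}\diameter{S}^{n-1}$, divided by $\diameter{S}^{n}$. If I were to re-derive it, I would proceed as follows. The upper half is~\eqref{math:volume-from-height-and-face} applied to a vertex of minimal height, with the opposite facet bounded by Proposition~\ref{proposition:extremal-properties}. The lower half follows by induction over a flag of faces $v_0 \in [v_0,v_1] \subset \dots \subset S$. At each step, the height of the newly added vertex within its face is at least its height in $S$, because the distance to a smaller affine hull is no smaller. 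Hence that height is at least $\minheight{S}$, and the recursion $\volume{k} = \volume{k-1}h_k/k$ gives the $1/n!$ factor.

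\emph{Radius ratio.} Here the only extra step is to pass between $\diameter{S}$ and $\outradius{S}$ using~\eqref{math:diameter-vs-outradius}, and the main point of care is the direction in which each inequality is used. For the upper bound, $\aspectratio{S} = \inradius{S}/\diameter{S} \leq \Cjung{n}\inradius{S}/\outradius{S} = \Cjung{n}\radiusratio{S}$, which I insert into the aspect-ratio upper bound. For the lower bound, $\aspectratio{S} \geq \inradius{S}/(2\outradius{S}) = \radiusratio{S}/2$, which I insert into $\fullness{S} \geq \Cinradius{n}\aspectratio{S}^{n}$ to obtain the factor $2^{-n}$.
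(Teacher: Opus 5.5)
Your proposal is correct and follows the paper's own route: each pair is obtained by dividing the volume bounds derived just before the statement (the incenter decomposition with the facet bounds from Proposition~\ref{proposition:extremal-properties}, the inradius bound in~\eqref{math:inradius-volume-outradius}, and the recursive height argument) by $\diameter{S}^{n}$. The radius-ratio pair then comes from inserting the Jung-based comparisons $\Cjung{n}\radiusratio{S} \geq \aspectratio{S} \geq \tfrac{1}{2}\radiusratio{S}$, and your flag-of-faces induction for the $1/n!$ factor spells out what the paper compresses into a single sentence.
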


\begin{remark}
    Let us illustrate these inequalities in low dimensions. If $n=2$ and $S$ is a triangle, then the preceding inequalities become
    \begin{align*}
        \tfrac{1}{2} \thickness{S}
        &
        \geq 
        \aspectratio{S}
        \geq 
        \tfrac{1}{3} \thickness{S},
        &
        \tfrac{1}{\sqrt{3}} \radiusratio{S}
        &
        \geq 
        \aspectratio{S}
        \geq 
        \tfrac{1}{2} \radiusratio{S},
        &
        \thickness{S}
        &
        \geq 
        \radiusratio{S}
        \geq 
        \tfrac{1}{\sqrt{3}} \thickness{S},
        \\
        \tfrac{3}{2} \aspectratio{S}
        &
        \geq 
        \fullness{S}
        \geq 
        3\sqrt{3} \aspectratio{S}^{2},
        &
        \tfrac{1}{2} \thickness{S}
        &
        \geq 
        \fullness{S}
        \geq 
        \tfrac{1}{2} \thickness{S}^{2},
        &
        \tfrac{\sqrt{3}}{2} \radiusratio{S}
        &
        \geq 
        \fullness{S}
        \geq 
        \tfrac{3\sqrt{3}}{4} \radiusratio{S}^{2}
        .
    \end{align*}
    If $n = 3$ and $S$ is a tetrahedron, then the preceding inequalities become
    \begin{align*}
        \tfrac{1}{2} \thickness{S}
        &
        \geq 
        \aspectratio{S}
        \geq 
        \tfrac{1}{4} \thickness{S},
        &
        \sqrt{\tfrac{3}{8}} \radiusratio{S}
        &
        \geq 
        \aspectratio{S}
        \geq 
        \tfrac{1}{2} \radiusratio{S},
        &
        \thickness{S}
        &
        \geq 
        \radiusratio{S}
        \geq 
        \tfrac{1}{\sqrt{6}} \thickness{S},
        \\
        \tfrac{1}{\sqrt{3}} \aspectratio{S}
        &
        \geq 
        \fullness{S}
        \geq 
        8\sqrt{3} \aspectratio{S}^{3},
        &
        \tfrac{\sqrt{3}}{12} \thickness{S}
        &
        \geq 
        \fullness{S}
        \geq 
        \tfrac{1}{6} \thickness{S}^{3},
        &
        \tfrac{1}{2\sqrt{2}} \radiusratio{S}
        &
        \geq 
        \fullness{S}
        \geq 
        \sqrt{3} \radiusratio{S}^{3}.
    \end{align*}
\end{remark}

\section{Solid angles as shape measures}\label{section:solidangles}

Solid angles are higher-dimensional generalizations of the classical angle between two vectors.
We discuss solid angles of simplices and how they relate to the fullness.
The main result of this section can be paraphrased as follows: \emph{simplices are well-shaped if and only if they do not have small solid angles}. 

If $\unitu_{0}, \dots, \unitu_{m} \in \unitsphere{n-1}$ are linearly independent, then 
\begin{align*}
    \calA( \unitu_{0}, \dots, \unitu_{m} ) := \unitsphere{n-1} \cap \cone\{ \unitu_{0}, \dots, \unitu_{m} \}
\end{align*}
is the spherical $m$-dimensional simplex with vertices $\unitu_{0}, \dots, \unitu_{m}$. 
The spherical simplex, the cone, and the vertices uniquely identify each other. 

Assume that $S \subset \bbR^{n}$ is an $n$-dimensional simplex with vertices $v_{0}, \dots, v_{n}$. 
The \emph{solid angle} of $S$ at vertex $v_{i}$ is defined as the measure 
\begin{align}
    \label{math:solidangledefinition:sphere-and-cone}
    \solidangle_{i}(S)
    :=
    \Hausdorff^{n-1}\left( \unitsphere{n-1} \cap \cone\{ v_{0} - v_i, \dots, v_{i-1}-v_i, v_{i+1}-v_i, \dots, v_n-v_i \} \right)
    .
\end{align}
In other words, the edges adjacent to the vertex $v_{i}$ define a spherical simplex, 
\begin{align*}
    \calA_{i}
    :=
    \calA\left( 
        \frac{ v_{0  } - v_{i} }{ \| v_{0  } - v_{i} \| },
        \dots,
        \frac{ v_{i-1} - v_{i} }{ \| v_{i-1} - v_{i} \| },
        \frac{ v_{i+1} - v_{i} }{ \| v_{i+1} - v_{i} \| },
        \dots, 
        \frac{ v_{n  } - v_{i} }{ \| v_{n  } - v_{i} \| }
    \right),
\end{align*}
and the solid angle is the measure of that spherical simplex.
Alternatively,
\begin{align}
    \label{math:solidangledefinition:limit}
    \solidangle_{i}(S)
    :=
    \lim_{\epsilon \rightarrow 0} 
    \dfrac 
    {\Hausdorff^{n-1}\left( \Sphere{v_{i}}{\epsilon} \cap S \right)}
    {\epsilon^{n-1}}
    .
\end{align}
It is easy to see that this quotient attains a constant value for $\epsilon > 0$ small enough.

A distinct formulation of the solid angle occassionally is of great utility. 
Suppose that the matrix $M_{i} \in \bbR^{n \times n}$ has a columns the vectors 
\begin{align*}
    v_{0} - v_{i}, \quad \cdots \quad v_{i-1} - v_{i}, \quad v_{i+1} - v_{i}, \quad \cdots \quad v_{n} - v_{i},
\end{align*}
The solid angle at $v_{i}$ satisfies the representation 
\begin{align}\label{math:orthant-integral}
    \solidangle_{i}(S)
    =
    \frac{ \unitspherearea{n-1} }{ \pi^{\frac n 2} }
    \int_{ M_{i} \cdot \bbR_{+}^{n} }
    \exp( -\| x \|^{2} ) 
    dx 
    =
    \lvert \det(M_{i}) \rvert
    \frac{ \unitspherearea{n-1} }{ \pi^{\frac n 2} }
    \int_{ \bbR_{+}^{n} }
    \exp( -\| M_{i} x \|^{2} ) 
    dx 
    .
\end{align}
as an integral over the positive orthant.

It is a classical result that the sum of the angles within a triangle equals $\pi$. 
The generalization to higher dimension takes a weaker form. 
The sum of the solid angles in a tetrahedron is at most $2\pi$ and can be arbitrarily close to zero~\cite{gaddum1952sums}.
More generally, the sum of the solid angles in an $n$-dimensional simplex is at most $\unitspherearea{n-1} / 2$.
This extension to higher dimensions can be embedded in the discussion of convex polytopes~\cite[(24)]{perles1967angle} and random polytopes~\cite[Theorem~3.10]{Kabluchko2021recursive} but also admits an elementary proof that is inspired by the classical planar case. We include it for completeness.

\begin{theorem}\label{theorem:sum-of-solid-angles}
    Let $S$ be an $n$-dimensional simplex with vertices $v_0, \dots, v_{n} \in \bbR^{n}$, for $n \geq 2$. Then
    \begin{align}\label{math:sum-of-solid-angles}
        \sum_{i=0}^{n} \solidangle_{i}(S) \leq \frac{\unitspherearea{n-1}}{2}.
    \end{align}
    In particular, the minimum solid angle of $S$ is at most $\unitspherearea{n-1} / (2n+2)$. 
\end{theorem}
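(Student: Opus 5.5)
The plan is to follow the classical planar argument: place the simplex's vertex cones, suitably translated, inside a half-space of directions and show that they do not overlap. For each vertex $v_i$ let $K_i := \cone\{ v_j - v_i : j \neq i \}$ be the tangent cone, so that $\solidangle_i(S) = \Hausdorff^{n-1}(K_i \cap \unitsphere{n-1})$. The claim to establish is that the interiors of the cones $K_0, \dots, K_n$ are pairwise disjoint and that their union lies, up to a null set, in a closed half-space through the origin. Since the half-space meets $\unitsphere{n-1}$ in a hemisphere of measure $\unitspherearea{n-1}/2$, inequality~\eqref{math:sum-of-solid-angles} follows immediately. The bound on the minimum solid angle then comes from averaging over the $n+1$ vertices.

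I would describe the cones through the barycentric gradients. Let $\lambda_0, \dots, \lambda_n$ be the barycentric coordinates, with constant gradients $g_j := \nabla \lambda_j$ satisfying $\sum_j g_j = 0$. A direction $d$ lies in $K_i$ exactly when $g_j \cdot d \geq 0$ for all $j \neq i$; equivalently, moving from $v_i$ in direction $d$ does not decrease any barycentric coordinate other than $\lambda_i$. Because $\sum_j g_j \cdot d = 0$, such a $d$ also satisfies $g_i \cdot d = -\sum_{j\neq i} g_j\cdot d \leq 0$. Disjointness is then straightforward. If $d$ lies in the interior of both $K_i$ and $K_k$ with $i \neq k$, then $g_j\cdot d > 0$ for all $j\neq i$ and for all $j \neq k$. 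Hence $g_j \cdot d > 0$ for every $j$, which contradicts $\sum_j g_j \cdot d = 0$.

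The main obstacle is that the union of the $K_i$ need not lie in one fixed half-space. Disjointness alone only gives $\sum_i \solidangle_i \leq \unitspherearea{n-1}$, so the missing factor $1/2$ has to come from elsewhere. My first attempt would use the antipodal map. I would show that no $d$ lies in the interior of $K_i$ while $-d$ lies in the interior of some $K_k$. For $k \neq i$: $d \in \operatorname{int}K_i$ gives $g_j\cdot d>0$ for $j\neq i$, in particular $g_k\cdot d>0$, whereas $-d\in \operatorname{int}K_k$ gives $g_j\cdot d<0$ for all $j\neq k$. With $n\geq 2$ there is an index $j\notin\{i,k\}$, and for it both signs would be required, a contradiction. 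For $k=i$: we need $g_j\cdot d>0$ and $g_j\cdot d<0$ for $j\neq i$, which is impossible. Hence the $2n+2$ sets $\pm(K_i\cap\unitsphere{n-1})$ have pairwise disjoint interiors in $\unitsphere{n-1}$. Their boundaries are null sets, so $2\sum_i \solidangle_i(S)\leq \unitspherearea{n-1}$, which is the claim.

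This antipodal argument replaces the half-space picture entirely, and it is exactly where the hypothesis $n\geq 2$ enters. For $n=1$ the two rays are antipodal and the bound fails. Finally, the minimum of the $n+1$ solid angles is at most their average, so it is at most $\unitspherearea{n-1}/(2n+2)$.
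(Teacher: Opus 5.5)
Your proof is correct and is essentially the paper's argument: both show that the $2n+2$ spherical simplices $\pm\calA_i$ have pairwise disjoint interiors. Two unreflected cones are separated by the relation $\sum_j g_j = 0$, and a cone and a reflected cone are separated by an index outside $\{i,k\}$, which exists because $n \geq 2$. The paper's edge coordinates $\lambda_k$ and $\mu_k$ relative to $v_0$ and $v_n$ are exactly your $g_k \cdot x$, so its identities $\lambda_k = \mu_k$ and $\lambda_n = -\sum_{k=0}^{n-1}\mu_k$ are your two observations written in different notation.
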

\begin{proof}
    Without loss of generality, after a translation, $v_{0} = 0$.
    Suppose that $x \in \bbR^{n}$. 
    There exist unique $\lambda_1,\dots,\lambda_{n}, \mu_{0},\dots,\mu_{n-1} \in \bbR$ such that 
    \begin{align*}
        x = \sum_{i=1}^{n} \lambda_{i} ( v_{i} - v_{0} ) = \sum_{i=0}^{n-1} \mu_{i} ( v_{i} - v_{n} ).
    \end{align*}
    We rearrange this equation,
    \begin{align*}
        (-\tsum_{i=1}^{n} \lambda_{i}) v_{0}
        +
        \sum_{i=1}^{n-1} \lambda_{i} v_{i}
        +
        \lambda_{n} v_{n}
        =
        \mu_{0} v_{0}
        +
        \sum_{i=1}^{n-1} \mu_{i} v_{i}
        +
        (-\tsum_{i=0}^{n-1} \mu_{i}) v_{n}
        ,
    \end{align*}
    and notice that $\lambda_{i} = \mu_{i}$ for $1 \leq i \leq n-1$.
    In addition, $\lambda_{n} = -\tsum_{i=0}^{n-1} \mu_{i}$. 
    Consider the open cones 
    \begin{gather*}
        C_{0}^{+} := \sum_{i=1}^{n}   \bbR^{+} \cdot ( v_{i} - v_{0} ),
        \qquad 
        C_{0}^{-} := - C_{0}^{+},
        \qquad 
        C_{n}^{+} := \sum_{i=0}^{n-1} \bbR^{+} \cdot ( v_{i} - v_{n} ),
        \qquad 
        C_{n}^{-} := - C_{n}^{+}.
    \end{gather*}
    Clearly, $C_{0}^{+} \cap C_{0}^{-} = C_{n}^{+} \cap C_{n}^{-} = \emptyset$. 
    If $x \in C_{0}^{+}$, then positivity of $\lambda_{1}, \dots, \lambda_{n-1}$ shows that $x \notin C_{n}^{-}$,
    and positivity of $\lambda_{n}$ shows that $x \notin C_{n}^{+}$. 
    Likewise, $C_{n}^{+}$ is disjoint from $C_{0}^{-}$. 
    We conclude that the spherical simplices defined by the $n+1$ vertices, and their reflections across the origin,
    \begin{align*}
        \calA_{0}, \calA_{1}, \dots, \calA_{n}, -\calA_{0}, -\calA_{1}, \dots, -\calA_{n},
    \end{align*}
    are essentially disjoint within the unit sphere of dimension $n-1$. 
    The inequality~\eqref{math:sum-of-solid-angles} is now evident. 
\end{proof}

The minimum solid angle 
\begin{align*}
    \minsolidangle{S} := \min\{ \solidangle_{0}(S), \solidangle_{1}(S), \dots, \solidangle_{n}(S) \} 
\end{align*}
of the simplex $S$ is a candidate for a shape measure. 
The simplex $S$ is contained in the volume suspended by any vertex and its opposite spherical simplex, scaled by the radius $\diameter{S}$. 
Hence, 
\begin{align*}
    \volume{S} 
    \leq 
    \frac{\minsolidangle{S}}{\unitspherearea{n-1}} \unitballvolume{n} \diameter{S}^{n} 
    = 
    \frac{ \diameter{S}^{n} }{n} \minsolidangle{S}
    .
\end{align*}
Therefore,
\begin{align*}
    \fullness{S} \leq \frac{1}{n} \minsolidangle{S}.
\end{align*}
This means that well-shaped simplices do not have small solid angles. 
The purpose of this manuscript is to establish the converse implication.

\begin{example}
    Let $\epsilon \in (0,1)$. Consider a simplex $S_{\epsilon}$ with vertices 
    \begin{align*}
        v_{0} = 0, 
        \quad 
        v_{1} = e_{1}, 
        \quad 
        v_{2} = e_{1} + \epsilon e_{2}, 
        \quad \dots \quad 
        v_{n} = e_{1} + \epsilon e_{n}.
    \end{align*}
    Obviously, $\diameter{S_{\epsilon}} = \sqrt{ 1 + \epsilon^{2} }$ and $\volume{S_{\epsilon}} = \epsilon^{n-1} / n!$,
    and therefore 
    \begin{align*}
        \fullness{S_{\epsilon}} 
        = 
        \volume{S_{\epsilon}} \diameter{S_{\epsilon}}^{-n}
        = 
        \frac{\epsilon^{n-1}}{n! (1+\epsilon^{2})^{\frac{n}{2}} }
        .
    \end{align*}
    Always, $\solidangle_{1} = 2^{-n} \unitspherearea{n-1}$, and the solid angles at $v_2,\dots,v_{n}$ monotonically increase towards a constant value as $\epsilon$ decreases. 
    For the solid angle at $v_{0}$, consider the matrix 
    \begin{align*}
        M_{0}
        :=
        \begin{pmatrix}
            1 &        1 & \dots  & 1
            \\
              & \epsilon &        &           
            \\
              &          & \ddots &           
            \\
              &          &        & \epsilon 
        \end{pmatrix}
        .
    \end{align*}
    If $x \in \bbR^{n}$ with positive coordinates, then one easily verifies 
    \begin{align*}
        \| M_{0} x \|^{2} \leq (n+\epsilon^{2}) \|x\|^{2}
        ,
        \qquad 
        \| M_{0} x \|^{2} \geq \|x\|^{2}
        .
    \end{align*}
    Applying the lower and upper bound in the representation~\eqref{math:orthant-integral} of the solid angles as orthant integral and performing a substitution, the following inequalities emerge:
    \begin{align*}
        2^{-n} \unitspherearea{n-1}
        \frac{ \epsilon^{n-1} }{ n! (n+\epsilon)^{\frac n 2} } 
        \leq 
        \solidangle_{0}
        \leq 
        2^{-n} \unitspherearea{n-1}
        \frac{ \epsilon^{n-1} }{ n! 2^{n} } 
        .
    \end{align*}
    This shows that any upper bound for the fullness in terms of the minimum solid angle, 
    in two or more dimensions, must be at least linear. 
\end{example}

Let $\halfspace_{n}$ be the northern halfspace, and let $\bbE_{n-1} \subset \bbR^{n}$ be the tangent plane at the north pole,
\begin{align*}
    \halfspace_{n} := \{ (x_{1},\dots,x_{n}) \in \bbR^{n} \suchthat x_{n} > 0 \},
    \qquad 
    \bbE_{n-1} := \{ x \in \bbR^{n} \suchthat x_{n} = 1 \}.
\end{align*}
The gnomonic projection is 
\begin{align*}
    \Gnomonic : \unitsphere{n-1} \cap \halfspace_{n} \to \bbE_{n-1}, \quad (x_{1},\dots,x_{n-1},x_{n}) \mapsto (x_{1}/x_{n},\dots,x_{n-1}/x_{n},1).
\end{align*}
Its inverse takes the form 
\begin{align*}
    \Gnomonic^{-1} : \bbE_{n-1} \to \unitsphere{n-1} \cap \halfspace_{n}, \quad y \mapsto \frac{y}{\|y\|}.
\end{align*}
The gnomonic projection $\Gnomonic$ maps spherical simplices in the northern hemisphere $\unitsphere{n-1} \cap \halfspace_{n}$ to simplices within the tangential plane $\bbE_{n-1}$. 

\begin{lemma}
    Let $\unitu_{0}, \dots, \unitu_{m} \in \unitsphere{n-1} \cap \halfspace_{n}$ be linearly independent unit vectors in the northern hemisphere. 
    Write $u_{i} := \Gnomonic( \unitu_{i} )$ for $0 \leq i \leq m$.
    Then $\convex\{ u_{0}, \dots, u_{m} \}$ is an $m$-dimensional simplex with vertices $u_{0},\dots,u_m$ and 
    \begin{align*}
        \Gnomonic( \calA( \unitu_{0}, \dots, \unitu_{m} ) ) = \convex\{ u_{0}, \dots, u_{m} \}.
    \end{align*}
\end{lemma}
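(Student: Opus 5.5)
The proof has two parts. First, $u_0,\dots,u_m$ are affinely independent, so their convex hull is an $m$-simplex. Second, the set identity follows by tracking how $\Gnomonic$ transforms nonnegative combinations.

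For affine independence, write $u_i = \unitu_i/(\unitu_i)_n$ with $(\unitu_i)_n>0$. Each $u_i$ is a positive rescaling of $\unitu_i$, so the $u_i$ are linearly independent. Suppose $\sum_i c_i u_i = 0$ with $\sum_i c_i = 0$. Linear independence forces all $c_i=0$. Hence $u_0,\dots,u_m$ are affinely independent, and $\convex\{u_0,\dots,u_m\}$ is an $m$-dimensional simplex with these vertices.

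For the inclusion $\Gnomonic(\calA(\unitu_0,\dots,\unitu_m)) \subseteq \convex\{u_0,\dots,u_m\}$, take $\hat x \in \calA(\unitu_0,\dots,\unitu_m)$.
\begin{itemize}
\item Write $\hat x = \sum_i a_i \unitu_i$ with $a_i \geq 0$, not all zero since $\|\hat x\|=1$.
\item Then $\hat x = \sum_i b_i u_i$ with $b_i := a_i (\unitu_i)_n \geq 0$.
\item The $n$-th coordinate gives $\hat x_n = \sum_i b_i > 0$. So $\hat x$ lies in $\halfspace_n$ and $\Gnomonic$ is defined there.
\item We get $\Gnomonic(\hat x) = \hat x/\hat x_n = \sum_i (b_i/\sum_j b_j)\, u_i$, which is a convex combination of the $u_i$.
\end{itemize}

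For the reverse inclusion, let $y = \sum_i t_i u_i$ be a convex combination.
\begin{itemize}
\item Then $y \in \bbE_{n-1}$, since every $u_i$ has last coordinate $1$.
\item Hence $\Gnomonic^{-1}(y) = y/\|y\| = \sum_i \bigl(t_i/(\|y\|(\unitu_i)_n)\bigr)\unitu_i$, a nonnegative combination of the $\unitu_i$ lying on the unit sphere.
\item So $\Gnomonic^{-1}(y) \in \calA(\unitu_0,\dots,\unitu_m)$, and $\Gnomonic$ maps it back to $y$.
\end{itemize}
This uses that $\Gnomonic$ and $\Gnomonic^{-1}$ are mutually inverse bijections between $\unitsphere{n-1}\cap\halfspace_n$ and $\bbE_{n-1}$, which follows directly from the formulas.

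There is no serious obstacle. The one point needing care is that every element of the spherical simplex lies strictly in the northern hemisphere, so that $\Gnomonic$ is defined on it. This follows because the last coordinate is a nonnegative, not identically zero, combination of the positive numbers $(\unitu_i)_n$.
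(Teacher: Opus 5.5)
Your proof is correct and follows essentially the same route as the paper. The paper argues that $\Gnomonic$ preserves rays from the origin, so it carries $\unitsphere{n-1}\cap C$ onto $\bbE_{n-1}\cap C$, where $C$ is the cone spanned by the $\unitu_i$. Your coefficient bookkeeping makes explicit the two facts the paper takes for granted: that $\bbE_{n-1}\cap C$ is the simplex with vertices $u_0,\dots,u_m$, and that the spherical simplex lies strictly in the open northern hemisphere.
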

\begin{proof}
    Write $C := \cone\{ \unitu_{0}, \dots, \unitu_{m} \}$ and $\calA := \calA( \unitu_{0}, \dots, \unitu_{m} )$.
    We know that $K := \bbE_{n-1} \cap C$ is a simplex with vertices $u_{0}, \dots, u_{m}$.
    By definition, $\calA = \unitsphere{n-1} \cap C$. 
    Since $\Gnomonic$ preserves half-lines starting at the origin, $\Gnomonic(\calA) = K$ is evident. 
\end{proof}

\begin{lemma} 
    For all $y \in \bbE_{n-1}$ we have 
    \begin{align*}
        \det \left( \Jacobian \Gnomonic^{-t}_{|y} \Jacobian \Gnomonic^{-1}_{|y} \right)
        = 
        \|y\|^{-2n}
        .
    \end{align*}
\end{lemma}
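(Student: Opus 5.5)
We compute the Jacobian of $\Gnomonic^{-1}$ directly and evaluate the Gram determinant with a rank-one determinant identity. Parametrize $\bbE_{n-1}$ by $z \in \bbR^{n-1}$ via $y = (z,1)$, so $\|y\|^{2} = 1 + \|z\|^{2}$ and $\Gnomonic^{-1}(y) = y/\|y\|$. Differentiating with respect to $z$, the Jacobian $\Jacobian \Gnomonic^{-1}_{|y} \in \bbR^{n \times (n-1)}$ is
\begin{align*}
    \Jacobian \Gnomonic^{-1}_{|y}
    =
    \|y\|^{-1} \Id_{n \times (n-1)}
    -
    \|y\|^{-3} \, y z^{t}
    ,
\end{align*}
because $\partial_{z_j} y = e_j$ and $\partial_{z_j} \|y\| = z_j/\|y\|$. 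We write $P := \Id_{n\times(n-1)}$ and note that $P^{t} y = z$ and $P^{t}P = \Id_{n-1}$.

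Next we expand the Gram matrix:
\begin{align*}
    \Jacobian \Gnomonic^{-t}_{|y} \Jacobian \Gnomonic^{-1}_{|y}
    =
    \|y\|^{-2} \Id_{n-1}
    - 2 \|y\|^{-4} z z^{t}
    + \|y\|^{-6} \|y\|^{2} z z^{t}
    =
    \|y\|^{-2} \left( \Id_{n-1} - \|y\|^{-2} z z^{t} \right)
    .
\end{align*}
The middle term comes from $P^{t} y z^{t} = z z^{t}$, and the last from $z y^{t} y z^{t} = \|y\|^{2} z z^{t}$.

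Finally, the matrix determinant lemma gives $\det(\Id_{n-1} - \|y\|^{-2} z z^{t}) = 1 - \|z\|^{2}/\|y\|^{2} = 1/\|y\|^{2}$. Pulling out the scalar factor contributes $\|y\|^{-2(n-1)}$, so the determinant equals $\|y\|^{-2(n-1)} \cdot \|y\|^{-2} = \|y\|^{-2n}$, as claimed.

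The only delicate point is the bookkeeping of the cross terms in the Gram product, in particular that both cross terms equal $-\|y\|^{-4} z z^{t}$. As an alternative check, one could use rotational symmetry about the $x_n$-axis to reduce to $z = t e_1$. In that case the Jacobian is diagonal in a suitable basis, with $n-2$ tangential directions stretched by $\|y\|^{-1}$ and the radial direction stretched by $\|y\|^{-2}$. This gives the determinant $\|y\|^{-2(n-2)} \|y\|^{-4} = \|y\|^{-2n}$, confirming the computation.
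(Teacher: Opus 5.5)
Your proposal is correct and follows the paper's proof essentially line by line. You use the same Jacobian $\|y\|^{-1}\Id_{n\times(n-1)} - \|y\|^{-3}\, y z^{t}$ (the paper writes $y$ as $(z,0)+e_{n}$), the same reduction of the Gram matrix to $\|y\|^{-2}\bigl(\Id_{n-1} - \|y\|^{-2} z z^{t}\bigr)$, and the matrix determinant lemma; your rotational-symmetry singular-value check is a valid extra confirmation that the paper does not include.
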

\begin{proof}
    Recall that the normalization mapping $\Normalization : \bbR^{n} \setminus \{0\} \to \unitsphere{n-1}$ from the punctured Euclidean space onto the unit sphere, which is defined by $\Normalization(y) := y / \|y\|$, 
    has a Jacobian of the form 
    \begin{align*}
        \Jacobian \Normalization[y]
        &
        =
        \frac{1}{ \|y\| } \Id_{n\times n}
        -
        \frac{ y \otimes y^{t} }{\|y\|^{3}}
        .
    \end{align*}
    The Jacobian of $\Gnomonic^{-1}$ at any $y \in \bbE_{n-1}$ in tangential direction $v \in \bbR^{n}$, 
    where we write $y = (y_{1},\dots,y_{n-1},1)$ and $v = (v_{1},\dots,v_{n-1},0)$ and abbreviate $z := (y_{1},\dots,y_{n-1})$, takes the following form:
    \begin{align*}
        \Jacobian \Gnomonic^{-1}[y]
        &
        =
        ( \|z\|^2 + 1 )^{-\frac 1 2} \Id_{n \times (n-1)}
        -
        ( \|z\|^{2} + 1 )^{-\frac{3}{2}} ( (z,0)+e_{n}) \otimes z^{t}. 
    \end{align*}
    We abbreviate $s=\|z\|^{2}+1$ and find
    \begin{align*}
        &
        \bigl(\Jacobian \Gnomonic^{-1}[y]\bigr)^{t}
        \bigl(\Jacobian \Gnomonic^{-1}[y]\bigr)
        =
        \frac{1}{s}\Id_{n-1}
        -
        \frac{2}{s^{2}}\,z\otimes z^{t}
        +
        \frac{\|(z,1)\|^{2}}{s^{3}}\,z\otimes z^{t}
        =
        \frac{1}{s}
        \left(
            \Id_{n-1}
            -
            \frac{z\otimes z^{t}}{s}
        \right)
        .
    \end{align*}
    Therefore, the matrix determinant lemma applies and gives 
    \begin{align*}
        &
        \det\left(
            \Id_{n-1}
            -
            \frac{z\otimes z^{t}}{s}
        \right)
        =
        1-\frac{z^{t}z}{s}
        =
        1-\frac{\|z\|^{2}}{\|z\|^{2}+1}
        =
        \frac{1}{s}
        .
    \end{align*}
    The desired determinant formula follows. 
\end{proof}

\begin{remark}\label{remark:one-dimensional-gnomonic} 
    The gnomonic projection of the one-dimensional northern hemisphere, the upper semicircle, elegantly relates to trigonometric identities.
    Let $x = (x_1,x_2) \in \unitsphere{1} \cap \halfspace_{2}$ be in the northern hemisphere, and let $(z,1) = \Gnomonic(x)$. 
    Let $\alpha = \arctan(z) \in (-\pi/2,\pi/2)$. 
    Then $x_1 = \sin(\alpha)$ and $x_2 = \cos(\alpha)$. 
    At the same time, 
    $x_1 = \sin(\arctan(z)) = z / \sqrt{1+z^{2}}$ and $x_2 = \cos(\arctan(z)) = 1 / \sqrt{1+z^{2}}$.
    From $z = x_1 / x_2$ and $1 = x_{1}^{2} + x_{2}^{2}$, converse identities
    $z = \tan( \arcsin(x_1 ) ) = x_1 / \sqrt{ 1 - x_1^{2} }$ and $|z| = \tan( \arccos(x_2) ) = \sqrt{ 1 - x_2^{2} } / x_2$
    are derived. 
\end{remark}

The area of spherical simplices, and thus solid angles, can be formulated as a Euclidean integral. 
Suppose that $\unitu_{1}, \dots, \unitu_{n} \in \unitsphere{n-1} \cap \halfspace_{n}$ are linearly independent unit vectors in the northern hemisphere. 
Then
\begin{align*}
    \calA := \unitsphere{n-1} \cap \cone \{ \unitu_{1}, \dots, \unitu_{n} \}
\end{align*}
is a spherical simplex that is mapped along the gnomonic projection to the simplex 
\begin{align*}
    K := \Gnomonic(\calA) = \convex \{ \Gnomonic(\unitu_{1}), \dots, \Gnomonic(\unitu_{n}) \}.
\end{align*}
The area of the spherical simplex admits the representation as a weighted Euclidean integral 
\begin{align*}
    \Hausdorff^{n-1}(\calA) 
    =
    \int_{K} \|y\|^{-n} d\Hausdorff^{n-1}
    .
\end{align*}
Notice that the integrand is non-singular and smooth because $K \subset \bbE_{n-1}$. 
This reformulation of solid angles is central to our analysis of solid angles. 

\section{The minimum solid angle}\label{section:main-result}

We first address the special case of a tetrahedron, $n=3$, and then the case of simplices in any dimension $n$. 

\begin{theorem}\label{theorem:tetrahedral-main-result}
    Let $S \subset \bbR^{3}$ be a tetrahedron. We have the estimate 
    \begin{align}\label{math:tetrahedral-main-result}
        C_{\mathrm{tet}}^{-1}
        \minsolidangle{S}^{2}
        \leq 
        \fullness{S}
        \leq 
        \tfrac{1}{3} \minsolidangle{S}
        ,
        \qquad 
        C_{\mathrm{tet}}
        :=
        12 + 3\pi 
        .
    \end{align}
\end{theorem}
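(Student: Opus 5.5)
The upper bound $\fullness{S} \leq \tfrac13 \minsolidangle{S}$ is the case $n=3$ of the estimate derived after Theorem~\ref{theorem:sum-of-solid-angles}, so only the lower bound $\minsolidangle{S}^{2} \leq (12+3\pi)\,\volume{S}/\diameter{S}^{3}$ needs proof. The plan is to bound the solid angle at one well-chosen vertex from above, using the gnomonic representation $\Hausdorff^{2}(\calA) = \int_{K} \|y\|^{-3}\, d\Hausdorff^{2}$. The resulting estimates will involve $\volume{S}$, $\diameter{S}$ and one vertex height; the height is then eliminated by a case distinction. The expected source of the constant is $12$ from a strip estimate and $3\pi$ from a disc or complementary-angle estimate.

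\textbf{Setup.} Let $v_1v_2$ be a diameter edge, so $\|v_1-v_2\| = \diameter{S}$. Let $v_0$ be one of the two vertices opposite it, so that the face $F_0 = \convex\{v_1,v_2,v_3\}$ contains the diameter edge. Rotate and translate so that $v_0=0$ and $F_0$ lies in the plane $x_3 = \height{0}$. Then the gnomonic image of $\calA_0$ is $K = \height{0}^{-1} F_0$, placed in $\bbE_{2}$, and
\begin{align*}
    \solidangle_{0}(S) = \int_{K} (1+|z|^{2})^{-3/2}\, dz .
\end{align*}

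\textbf{Step 1 (strip bound).} Since $F_0$ has longest edge $\diameter{S}$, it lies in a strip of width $2\Area{0}/\diameter{S}$ orthogonal to that edge. Hence $K$ lies in a strip of width $w = 2\Area{0}/(\diameter{S}\height{0})$. For fixed transversal coordinate $s$, integrating along the strip gives $\int_{\bbR}(1+s^{2}+t^{2})^{-3/2}\,dt = 2/(1+s^{2}) \leq 2$. Together with \eqref{math:volume-from-height-and-face}, this yields
\begin{align*}
    \solidangle_{0}(S) \leq 2w = \frac{12\,\volume{S}}{\diameter{S}\,\height{0}^{2}} .
\end{align*}
This is sharp in the regime where $\height{0}$ is comparable to $\diameter{S}$. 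In that regime $\minsolidangle{S}^{2} \leq \solidangle_0 \cdot \solidangle_0$ is controlled by $\volume{S}/\diameter{S}^{3}$, using also $\solidangle_0 \leq 2\pi$ (Theorem~\ref{theorem:sum-of-solid-angles}).

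\textbf{Step 2 (small height).} When $\height{0}$ is small relative to $\diameter{S}$, I expect the solid angle at $v_1$ (or $v_2$) to be small instead. The tetrahedron lies in a slab of thickness $\height{0}$ around the plane of $F_0$, and $v_1$ sees the far vertex $v_2$ at distance $\diameter{S}$. So the spherical simplex at $v_1$ is confined to a thin band about a great circle, and its gnomonic image (taken along the direction $v_2-v_1$) is a thin region. I would aim for $\solidangle_{1}(S) \lesssim \height{0}/\diameter{S}$ with an explicit constant, with the $\pi$ coming from integrating $(1+|z|^{2})^{-3/2}$ over the band. Then
\begin{align*}
    \minsolidangle{S}^{2} \leq \solidangle_0\,\solidangle_1 \lesssim \frac{\volume{S}}{\diameter{S}^{2}\height{0}},
\end{align*}
which is the wrong power of $\height{0}$ on its own. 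I would therefore instead use Step~1 in the form $\solidangle_0 \leq 12\volume{S}/(\diameter{S}\height{0}^{2})$ and pair it with a quadratic band bound $\solidangle_1 \lesssim \height{0}^{2}/\diameter{S}^{2}$, which should hold because both angular dimensions at $v_1$ shrink. Multiplying, $\height{0}$ cancels: $\minsolidangle{S}^{2} \leq \solidangle_0 \solidangle_1 \leq 12\volume{S}/\diameter{S}^{3}$ times a constant.

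\textbf{Main obstacle.} The hard step is Step 2. I need a clean upper bound for the solid angle at an endpoint of the diameter edge, in terms of $\height{0}/\diameter{S}$, whose product with Step~1 cancels $\height{0}$, with constants adding up to $12+3\pi$. If the quadratic band bound fails (e.g.\ for pyramid-like shapes), the fallback is a two-case split on whether $\height{0} \geq \diameter{S}$ times a threshold. In the large-height case use Step~1 squared; in the small-height case use a disc estimate $\int_{K}\|y\|^{-3} \leq \sqrt{\pi |K|/2}$, obtained by Cauchy--Schwarz with $\int_{\bbR^{2}}(1+|z|^{2})^{-3}\,dz = \pi/2$, at a vertex where $|K|$ is controlled. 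The final constant is the sum $12 + 3\pi$ of the two cases.
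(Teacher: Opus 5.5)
\textbf{There is a genuine gap in the small-height regime.} Your upper bound and Step~1 are correct: the strip estimate does give $\solidangle_0\le 12\volume{S}/(\diameter{S}\height{0}^{2})$. But Step~1 only helps when $\height{0}$ is comparable to $\diameter{S}$, and in the small-height regime the missing piece is an idea, not a computation.

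\emph{The quadratic band bound is false.} The bound $\solidangle_1\lesssim\height{0}^{2}/\diameter{S}^{2}$ fails already for a flat pyramid over an equilateral base of side $1$ with apex at height $\epsilon$ above the centroid. There the solid angles at the base vertices are of order $\epsilon$, not $\epsilon^{2}$.

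\emph{The fallback fails for slivers.} Take the tetrahedron of Example~\ref{example:optimality-of-tetrahedral-quadratic-exponent}. All faces are congruent, every height is about $2\epsilon$, every solid angle is about $2\epsilon$, and $\volume{S}=2\epsilon^{2}/3$. So Step~1 gives only $\solidangle_i\lesssim 2$ at every vertex. Moreover, $\int_K(1+\|z\|^{2})^{-3/2}\,dz\le|K|$, so every gnomonic image of every vertex cone has $|K|\gtrsim 2\epsilon$. Hence any estimate of the form $\minsolidangle{S}^{2}\le\frac{\pi}{2}|K|$ (your Cauchy--Schwarz disc bound) or $\minsolidangle{S}^{2}\le C\,\solidangle_i$ is at best of order $\epsilon$, whereas you need order $\epsilon^{2}\sim\volume{S}$. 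A ``vertex where $|K|$ is controlled'' does not exist. What is needed is a bound on the product of two \emph{different} solid angles, each by an $O(\epsilon)$ quantity, such that the two quantities multiply to the volume. Nothing in the proposal provides this.

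\emph{The constant bookkeeping does not work either.} A threshold case split produces threshold-dependent constants and a maximum, not the sum $12+3\pi$.

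\textbf{What the paper does instead.} The missing ingredient is the choice of projection. Normalize $\diameter{S}=1$ and put the diameter edge itself on the axis: $v_0=0$, $v_3=e_3$, and $v_i=(f_i,\theta_i)$ with $0<\theta_i<1$. Then project the cones at \emph{both} endpoints along that edge. The images $K_0=\convex\{0,f_1/\theta_1,f_2/\theta_2\}$ and $K_3=\convex\{0,f_1/(1-\theta_1),f_2/(1-\theta_2)\}$ are vertexwise rescalings of one triangle $F=\convex\{0,f_1,f_2\}$ with $|F|=3\volume{S}$. Let $\alpha$ be the angle between $f_1$ and $f_2$. The paper derives four bounds:
\begin{itemize}
\item Two ``pure'' bounds: $\minsolidangle{S}^{2}\le\frac{\pi}{2}\solidangle_0\le\frac{\pi}{2}|F|/(\theta_1\theta_2)$, and its mirror with $(1-\theta_1)(1-\theta_2)$.
\item Two ``mixed'' bounds from half-strip estimates over a single leg. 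One has $\solidangle_0\le 2\theta_1^{-1}\|f_1\|\sin\alpha/(1+\sin\alpha)$ and $\solidangle_3\le 2(1-\theta_2)^{-1}\|f_2\|\sin\alpha/(1+\sin\alpha)$, so their product is at most $2|F|/(\theta_1(1-\theta_2))$. The other is symmetric, with the indices swapped.
\end{itemize}
The mixed bounds are what handle the sliver, where one vertex lies near each end of the diameter. Each solid angle is controlled by one leg of $F$, so the product sees the area of $F$.

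Multiply the four bounds by the weights $\theta_1\theta_2$, $(1-\theta_1)(1-\theta_2)$, $\theta_1(1-\theta_2)$, $(1-\theta_1)\theta_2$, which sum to $1$. This gives $\minsolidangle{S}^{2}\le(\pi+4)|F|=(12+3\pi)\volume{S}$, and this convex combination is where the sum of constants comes from. The refined factor $2\sin\alpha/(1+\sin\alpha)$ is needed to get $12$. Your full-strip bound $\int_{\bbR}(1+s^{2}+t^{2})^{-3/2}\,dt\le 2$, used in the mixed terms, would only give $48+3\pi$.
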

\begin{proof}
    Let $S$ be a tetrahedron with vertices $v_{0}, v_{1}, v_{2}, v_{3} \in \bbR^{3}$.
    It suffices to consider the case $\diameter{S} = 1$ because the rest follows by a scaling argument. 
    Without loss of generality, 
    after rotation, translation, and relabeling, 
    $v_{0} = 0$ and $v_{3} = e_{3}$.
    Since the edge from $v_{0}$ to $v_{3}$ is the diameter of $S$, the remaining vertices have the form
    \begin{align*}
        v_1 = (f_1, \theta_1 ),
        \qquad
        v_2 = (f_2, \theta_2 ),
    \end{align*}
    where $f_1, f_2 \in \bbR^{2}$ and $\theta_1, \theta_2 \in (0,1)$, and $f_1, f_2$ are linearly independent.
    These satisfy the inequalities 
    \begin{align*}
        \| f_1 \|^2 + \theta_1^2 \leq 1,
        \qquad
        \| f_2 \|^2 + \theta_2^2 \leq 1,
        \qquad
        \| f_1 \|^2 + (1-\theta_1)^2 \leq 1,
        \qquad
        \| f_2 \|^2 + (1-\theta_2)^2 \leq 1.
    \end{align*}
    The vertices $v_1, v_2, v_{3}$ lie in the northern half-space $\halfspace_{3}$.
    Let $\unitv_1, \unitv_2$ be the unit vectors in the directions of $v_1, v_2$, respectively.
    Their gnomonic projections onto the northern tangential plane $\bbE_{2}$ are
    \begin{align*}
        \Gnomonic(\unitv_1) = \left( f_1 / \theta_1, 1 \right),
        \qquad
        \Gnomonic(\unitv_2) = \left( f_2 / \theta_2, 1 \right)
        .
    \end{align*}
    Taking the gnomonic projection of the spherical triangle opposite to $v_{0}$ onto $\bbE_{2}$ and omitting the third coordinate gives the triangle 
    \begin{align*}
        K_{0} = \convex\left\{ 0, \frac{f_1}{\theta_1}, \frac{f_2}{\theta_2} \right\}
        \subset \bbR^{2}.
    \end{align*}
    Therefore, the solid angle $\solidangle_{0} := \solidangle_{0}(S)$ equals the planar integral 
    \begin{align*}
        \solidangle_{0}
        =
        \iint_{K_{0}} ( 1 + \|z\|^2)^{-\frac{3}{2}} dz
        .   
    \end{align*}
    A fully analogous argument rewrites the solid angle $\solidangle_{3} := \solidangle_{3}(S)$ as the planar integral
    \begin{align*}
        \solidangle_{3}
        =
        \iint_{K_{3}} ( 1 + \|z\|^2)^{-\frac{3}{2}} dz
        ,
        \qquad 
        K_{3} = \convex\left\{ 0, \frac{f_1}{1 - \theta_1}, \frac{f_2}{1 - \theta_2} \right\}
        .
    \end{align*}
    We define the new triangle $F = \convex\{ 0, f_1, f_2 \}$. 
    Let $\alpha \in (0,\pi)$ be the angle between $f_{1}$ and $f_{2}$.
    By Cavalieri's principle, $3 \volume{S} = \volume{F}$. 
    By construction, the measures satisfy 
    \begin{gather*}
        \volume{K_{0}} = \theta_1^{-1} \theta_2^{-1} \volume{F},
        \qquad
        \volume{K_{3}} = (1 - \theta_1)^{-1} (1 - \theta_2)^{-1} \volume{F},
        \qquad 
        3 \volume{S} = \volume{F} = \frac{1}{2} \sin(\alpha) b_1 b_2, 
        \\ 
        b_1 := \| f_1 \|,
        \qquad 
        b_2 := \| f_2 \|.
    \end{gather*}
    We can now estimate the solid angles. First, we notice the simple estimates 
    \begin{align*}
        \solidangle_{0} \leq \volume{K_{0}},
        \qquad
        \solidangle_{3} \leq \volume{K_{3}}
        .
    \end{align*}
    Since $K_{0}$ and $K_{3}$ are triangles with the origin as a vertex, 
    they lie in a radial sector of $\bbR^{2}$ with angle $\alpha$.
    Thus,
    \begin{align*}
        \solidangle_{0} 
        &
        \leq
        \frac{\alpha}{2\pi} 
        \iint_{\bbR^{2}} ( 1 + \|z\|^2)^{-\frac{3}{2}} dz 
        =
        \alpha 
        \int_{0}^{\infty} ( 1 + r^2 )^{-\frac{3}{2}} r dr 
        =
        \alpha 
        \int_{0}^{\infty} \left( -( 1 + r^2 )^{-\frac{1}{2}} \right)' dr 
        =
        \alpha 
        .
    \end{align*}
    The same argument applies to $K_{3}$. Thus, 
    $\solidangle_{0} \leq \alpha$, and $\solidangle_{3} \leq \alpha$, 
    which yields the product estimates 
    \begin{align*}
        \minsolidangle{S}^{2}
        \leq 
        \solidangle_{0} \solidangle_{3}
        \leq 
        \theta_{1}^{-1} \theta_{2}^{-1} 
        \volume{F} \alpha
        ,
        \qquad 
        \minsolidangle{S}^{2}
        \leq 
        \solidangle_{0} \solidangle_{3}
        \leq 
        (1 - \theta_1)^{-1} (1 - \theta_2)^{-1}
        \volume{F} \alpha
        .
    \end{align*}
    At the same time, Theorem~\ref{theorem:sum-of-solid-angles} states that the minimum solid angle of a tetrahedron is at most $\pi/2$, hence
    \begin{align*}
        \minsolidangle{S}^{2}
        \leq
        \frac{\pi}{2} \solidangle_{0}
        \leq 
        \theta_{1}^{-1} \theta_{2}^{-1} 
        \frac{\pi}{2} \volume{F},
        \qquad 
        \minsolidangle{S}^{2}  
        \leq
        \frac{\pi}{2} \solidangle_{3}
        \leq 
        (1 - \theta_1)^{-1} (1 - \theta_2)^{-1}
        \frac{\pi}{2} \volume{F}
        .
    \end{align*}
    The inclusion $K_{0} \subseteq [0,1]\theta_1^{-1} f_1 + \bbR_{0}^{+} \theta_2^{-1} f_2$ gives,
    after an orthogonal transformation that aligns $f_{2}$ with the second coordinate direction, 
    \begin{align*}
        \solidangle_{0}
        \leq
        I( \theta_{1}^{-1} b_{1}, \alpha )
        ,
        \qquad 
        I( \theta_{1}^{-1} b_{1}, \alpha )
        :=
        \int_{0}^{ \theta_{1}^{-1} b_{1} \sin(\alpha) }
        \int_{ x  \cot(\alpha) }^{\infty}
        ( 1 + x^2 + y^{2} )^{-\frac{3}{2}}
        dy dx
        .
    \end{align*}
    Abbreviate $B =  \cot(\alpha) x$. 
    We substitute $y = \sqrt{1+x^{2}} \tan(\varphi)$ and find 
    \begin{align*}
        \int_{ B }^{\infty}
        ( 1 + x^2 + y^{2} )^{-\frac{3}{2}} dy 
        &
        =
        \int_{ \arctan( B / \sqrt{1+x^{2}} ) }^{\frac{\pi}{2}}
        \frac{ \sec(\varphi)^{2} }{ (1+x^{2}) ( 1 + \tan(\varphi)^{2} )^{\frac{3}{2}} } d\varphi 
        =
        \frac{1}{ 1+x^{2} }
        \int_{ \arctan( B / \sqrt{1+x^{2}} ) }^{\frac{\pi}{2}}
        \frac{ \sec(\varphi)^{2} }{  \sec(\varphi)^{3} } d\varphi 
        \\&
        =
        \frac{1}{ 1+x^{2} }
        \int_{ \arctan( B / \sqrt{1+x^{2}} ) }^{\frac{\pi}{2}}
        \cos(\varphi) d\varphi 
        =
        \frac{ 1 - \sin( \arctan( B / \sqrt{1+x^{2}} ) ) }{ 1+x^{2} }
        \\
        &
        =
        \frac{ 1 }{ 1+x^{2} }
        -
        \frac{ \sin\arctan( \cot(\alpha) \tfrac{ x }{ \sqrt{1+x^{2}} } ) }{ 1+x^{2} }
        .
    \end{align*}
    As in Remark~\ref{remark:one-dimensional-gnomonic}, one can simplify 
    \begin{align*}
        &
        \sin\arctan( \cot(\alpha) \tfrac{ x }{ \sqrt{1+x^{2}} } )
        =
        \frac{ \cot(\alpha) x }{ \sqrt{ 1 + ( 1 + \cot^{2}(\alpha) ) x^{2} } }
        =
        ( \sin^{2}(\alpha) + x^{2} )^{-\frac 1 2} \cos(\alpha) x 
        .
    \end{align*}
    We bound the integral of this expression in $x$ from $0$ to $\theta_{1}^{-1} b_{1} \sin(\alpha)$. 
    For any $D > 0$, we notice 
    \begin{align*}
        \partial_D I( D, \alpha )
        &
        :=
        \partial_D
        \int_{0}^{ D \sin(\alpha) }
        \frac{ 1 }{ 1+x^{2} }
        -
        \frac{ \cos(\alpha) x }{ (1+x^2) ( \sin^{2}(\alpha) + x^{2} )^{ \frac 1 2} }
        dx
        \\&\qquad 
        =
        \frac{ \sin(\alpha) }{ 1 + D^{2} \sin(\alpha)^{2} }
        -
        \frac{ \cos(\alpha) D \sin(\alpha)^{2} }{ ( 1 + D^{2} \sin(\alpha)^{2} ) ( \sin^{2}(\alpha) + D^{2} \sin(\alpha)^{2} )^{ \frac 1 2} }
        \\&\qquad 
        =
        \frac{ \sin(\alpha) }{ 1 + D^{2} \sin(\alpha)^{2} }
        -
        \frac{ \cos(\alpha) \sin(\alpha) D }{ ( 1 + D^{2} \sin(\alpha)^{2} ) ( 1 + D^{2} )^{ \frac 1 2} }
        \\&\qquad 
        =
        \frac{ \sin(\alpha) }{ 1 + D^{2} \sin(\alpha)^{2} }
        \left( 1 - \cos(\alpha) \sin(\arctan(D)) \right)
        .
    \end{align*}
    The integral $I( \theta_{1}^{-1} b_{1}, \alpha )$ is increasing in $\alpha$ up to $\pi/2$.
    If $\alpha = \pi/2$, then the bound $\partial_D I( D, \alpha ) \leq 1$ leads to $I( \theta_{1}^{-1} b_{1}, \alpha ) \leq \theta_{1}^{-1} b_{1}$,
    so we consider now the case where $0 < \alpha < \pi/2$ or $\pi/2 < \alpha < \pi$. 
    \newcommand{\X}{\varphi}
    We write $\X := \sin(\arctan(D)) \in (0,1)$, so that $D = \X / \sqrt{ 1 - \X^2 }$. 
    One observes
    \begin{align*}
        &
        \frac{ \sin(\alpha) }{ 1 + D^{2} \sin(\alpha)^{2} }
        \left( 1 - \cos(\alpha) \sin(\arctan(D)) \right)
        \\&
        =
        \frac{ \sin(\alpha) ( 1 - \X^{2} ) \left( 1 - \cos(\alpha) \X \right) }{ 1 - \X^{2} + ( 1 - \X^{2} ) D^{2} \sin(\alpha)^{2} }
        \\&
        =
        \frac{ \sin(\alpha) ( 1 - \X^{2} ) \left( 1 - \cos(\alpha) \X \right) }{ 1 - \X^{2} + \sin(\alpha)^{2} \X^{2} }
        =
        \frac{ \sin(\alpha) ( 1 - \X^{2} ) \left( 1 - \cos(\alpha) \X \right) }{ 1 - \cos(\alpha)^{2} \X^{2} }
        =
        \frac{ \sin(\alpha) ( 1 - \X^{2} )}{ 1 + \cos(\alpha) \X }
        =
        \tan(\alpha) \frac{ 1 - \X^{2} }{ \sec(\alpha) + \X }
        .
    \end{align*} 
    The last quotient as a function in $\X$ vanishes at $-1$ and $+1$ and is positive over the interval $(-1,1)$. 
    Therefore, we seek its critical points at the solutions of the quadratic equation 
    \begin{align*}
        \X^{2} + 2 \X \sec(\alpha) + 1 = 0.
    \end{align*}
    Its two roots are $\X = - \sec(\alpha) \pm \sqrt{ \sec(\alpha)^{2} - 1 }$.
    We only take the critical point within $(-1,1)$. 
    If $0 < \alpha < \pi/2$, then this is $\X = - \sec(\alpha) + \sqrt{ \sec^{2}(\alpha) - 1 }$. 
    If $\pi/2 < \alpha < \pi$, then this is $\X = - \sec(\alpha) - \sqrt{ \sec^{2}(\alpha) - 1 }$. 
    In either case, $\X = \tan(\alpha) - \sec(\alpha)$.
    Therefore, 
    \begin{align*}
        \tan(\alpha) \frac{ 1 - \X^{2} }{ \sec(\alpha) + \X }
        &
        =
        1 - \left( \sec(\alpha) - \tan(\alpha) \right)^{2}
        \\&
        =
        1 - \left( \frac{ 1 - \sin(\alpha) }{ \cos(\alpha) } \right)^{2}
        =
        \frac{ 2 \sin(\alpha) - 2\sin(\alpha)^{2} }{ \cos(\alpha)^{2} } 
        =
        2 \sin(\alpha) \frac{ 1 - \sin(\alpha) }{ 1 - \sin(\alpha)^{2} } 
        =
        \frac{ 2 \sin(\alpha) }{ 1 + \sin(\alpha) } 
        .
    \end{align*}
    We integrate $\partial_{D} I(D,\alpha)$ from $0$ to $b_{1} / \theta_{1}$, 
    thus bounding $I(b_{1}/\theta_{1},\alpha)$. 
    For all $0 < \alpha < \pi$, we have verified 
    \begin{align*}
        I( \theta_{1}^{-1} b_{1}, \alpha )
        \leq 
        \theta_{1}^{-1} b_{1}
        \frac{ 2 \sin(\alpha) }{ 1 + \sin(\alpha) } 
        .
    \end{align*}
    This leads to the following upper estimate for $\solidangle_{0}$, another upper estimate based on the second vertex, and analogous upper bounds for the opposite solid angle $\solidangle_{3}$:
    \begin{align*}
        \solidangle_{0}
        \leq 
        \frac{
            2 b_{1} \theta_{1}^{-1} \sin(\alpha) 
        }{
            1 + \sin(\alpha)
        }
        ,
        \quad 
        \solidangle_{0}
        \leq 
        \frac{
            2 b_{2} \theta_{2}^{-1} \sin(\alpha) 
        }{
            1 + \sin(\alpha)
        }
        ,
        \quad 
        \solidangle_{3}
        \leq 
        \frac{
            2 b_{1} (1-\theta_{1})^{-1} \sin(\alpha) 
        }{
            1 + \sin(\alpha)
        }
        ,
        \quad 
        \solidangle_{3}
        \leq 
        \frac{
            2 b_{2} (1-\theta_{2})^{-1} \sin(\alpha) 
        }{
            1 + \sin(\alpha)
        }
        .
    \end{align*}
    Along these lines of reasoning, the following product estimates are established:
    \begin{align*}
        \solidangle_{0} \solidangle_{3} 
        \leq 
        \theta_{1}^{-1} (1-\theta_{2})^{-1}
        b_{1} b_{2} 
        \sin(\alpha)
        \frac{
            4  \sin(\alpha) 
        }{
            ( 1 + \sin(\alpha) )^{2}
        }
        \leq 
        \theta_{1}^{-1} (1-\theta_{2})^{-1}
        2\volume{F}
        ,
        \qquad 
        \solidangle_{0} \solidangle_{3} 
        \leq 
        (1-\theta_{1})^{-1} \theta_{2}^{-1}
        2\volume{F}
        .
    \end{align*}
    Here, we have used that $4 \sin(\alpha) \leq ( 1 + \sin(\alpha) )^{2}$ because $2 \sqrt{\sin(\alpha)} \leq 1 + \sin(\alpha)$.
    This leads to the following sum:
    \begin{align*}
        \minsolidangle{S}^{2}
        &
        =
        \minsolidangle{S}^{2}
        \left( \theta_{1} \theta_{2} + \theta_{1} (1 - \theta_2) + (1 - \theta_1) \theta_{2} + (1 - \theta_1) (1 - \theta_2)  \right)
        \leq 
        2\left( \tfrac{\pi}{2} + 2 \right) \volume{F}
        =
        ( 12 + 3\pi ) \volume{S}
        .
    \end{align*}
    Obviously, this proves~\eqref{math:tetrahedral-main-result} in the case $\diameter{S} = 1$.     
    Finally, the assumption $\diameter{S} = 1$ can be omitted by a scaling argument. 
    This proves the desired estimate.     
\end{proof}

\begin{remark}\label{remark:tetrahedral-motivation}
    We remark on the geometric intuition in this proof. 
    First, $\volume{F}$ is proportional to $\volume{S}$ by Cavalieri's principle. 
    If $v_1$ and $v_2$ are closer to $v_{3}$, say, $\theta_1, \theta_2 > \tfrac{1}{2}$, 
    then the solid angle $\solidangle_{0} \leq \volume{K_{0}} \leq 4 \volume{F}$, since the area distortion of the gnomonic projection is bounded in this regime. 
    The solid angle $\solidangle_{3}$ is at most $2\pi$ by Theorem~\ref{theorem:sum-of-solid-angles}. 
    If instead $v_1$ and $v_2$ are closer to $v_{0}$, say, $\theta_1, \theta_2 \leq \tfrac{1}{2}$,
    we reverse the roles of which solid angle is controlled and which is estimated only uniformly. 
    In either case, the product $\solidangle_{0} \solidangle_{3}$ is controlled.
    
    It remains to address the mixed case. 
    If $\theta_{1} > \tfrac{1}{2}$ but $\theta_{2} \leq \tfrac{1}{2}$, then $K_{0}$ is contained within the strip $[ 0, \theta_{1}^{-1} \|f_1\| \sin(\alpha) ] \times \bbR$, after an orthogonal change of coordinates.
    The solid angle $\solidangle_{0}$ is bounded by a fixed multiple of the width $\theta_{1}^{-1} \|f_1\| \sin(\alpha) \leq 2 \|f_1\| \sin(\alpha)$.
    An analogous estimate applies to the opposite solid angle $\solidangle_{3}$. 
    Each of these solid angles is controlled by one leg of the triangle $F$,
    and hence their product is controlled by both legs. 
    This allows us to use $2 \volume{F} = \|f_1\| \|f_2\| \sin(\alpha)$.
    
    Hence, after picking in accordance to whether each of $v_1$ and $v_2$ is closer to $v_0$ or $v_{3}$, 
    we conduct a case distinction and bound $\solidangle_{0} \solidangle_{3}$ by a multiple of $\volume{S}$.
    The proof of Theorem~\ref{theorem:tetrahedral-main-result} refines this idea:
    instead of picking the worst possible bound,
    we take a weighted average of these estimates.
\end{remark}

\begin{example}\label{example:optimality-of-tetrahedral-quadratic-exponent}
    We show that the quadratic exponent in Theorem~\ref{theorem:tetrahedral-main-result} is optimal and cannot be lowered.
    Let $\epsilon \in (0,1/2)$.
    Inspired by the sliver example in Figure~\ref{figure:simplex-degeneracies}, we consider the tetrahedron $S$ with vertices
    \begin{align*}
        v_{0} = -\epsilon e_{1},
        \quad
        v_{1} = \epsilon e_{1},
        \quad
        v_{2} = e_{2} - \epsilon e_{3},
        \quad
        v_{3} = e_{2} + \epsilon e_{3}.
    \end{align*}
    We have $\diameter{S} = \sqrt{1 + 2\epsilon^{2}}$, and a simple determinant calculation gives $\volume{S} = 2\epsilon^{2}/3$.
    Write $d_{ij} := \|v_i-v_j\|$ for $0 \leq i < j \leq 3$.
    Then
    \begin{align*}
        d_{01} = d_{23} = 2\epsilon,
        \qquad
        d_{02} = d_{03} = d_{12} = d_{13} = \sqrt{1 + 2\epsilon^{2}}.
    \end{align*}
    The solid angles of $S$ are identical because of symmetry.
    A theorem by Liu and Joe~\cite[Theorem~2.2]{liu1994relationship} gives 
    \begin{align*}
        \sin\left( \solidangle_{0}(S) / 2 \right)
        =
        12 \volume{S}
        \cdot
        \prod_{ 1 \leq i < j \leq 3 }
        ( d_{0i} + d_{0j} + d_{ij} )^{-1/2}
        ( d_{0i} + d_{0j} - d_{ij} )^{-1/2}
        .
    \end{align*}
    A direct calculation gives
    \begin{align*}
        \sin\left( \solidangle_{0}(S) / 2 \right)
        &=
        \frac{ \epsilon }{ ( \epsilon + \sqrt{ 1 + 2\epsilon^{2} } ) \sqrt{ 1 + \epsilon^{2} } }
        \in
        (0,1)
        .
    \end{align*}
    Write $k(\epsilon) := (\epsilon+\sqrt{1+2\epsilon^{2}}) \sqrt{1+\epsilon^{2}}$.
    Then
    \begin{align*}
        \solidangle_{0}(S)/2
        =
        \arcsin\left(
            \epsilon / k(\epsilon)
        \right).
    \end{align*}
    Consequently, for every fixed $1 \leq p < \infty$ it holds that 
    \begin{align*}
        \frac{ \minsolidangle{S}^{p} }{ \fullness{S} }
        &=
        \frac{3}{2\epsilon^{2}}
        ( 1 + 2 \epsilon^{2} )^{\frac{3}{2}}
        \left(
            2
            \arcsin\left( \frac{ \epsilon }{ k(\epsilon) } \right)
        \right)^{p}
        .
    \end{align*}
    If $\epsilon$ converges to zero, then $k(\epsilon)$ converges to one,
    and the quotient diverges for every $p < 2$, whereas it converges to the limit $6$ for $p = 2$. 
    Thus, $p=2$ is the lowest possible exponent in the lower bound of \eqref{math:tetrahedral-main-result}.
    Moreover, any admissible constant $C_{\mathrm{tet}}$ in that inequality must satisfy $C_{\mathrm{tet}} \geq 6$.
\end{example}

We analyze the relationship between the fullness and the minimum solid angle of higher dimensional simplices. 

\begin{theorem}\label{theorem:main-result}
    Let $S \subset \bbR^{n}$ be an $n$-dimensional simplex with $n \geq 2$. 
    We have the estimate 
    \begin{align}\label{math:main-result}
        C_{n}^{-1}
        \minsolidangle{S}^{2}
        \leq 
        \fullness{S}
        \leq 
        \tfrac{1}{n} \minsolidangle{S}
        ,
        \qquad 
        C_{n}
        :=
        n 
        \pi^{\frac{ n-1 }{ 2 }}
        \sum_{ \ell = 1 }^{n-2}
        \tbinom{ n-1 }{ \ell }^{2} %
        \frac{ \Gamma( \frac{ \ell+1 }{ 2 } ) \Gamma( \frac{ n - \ell }{ 2 } ) }{ \Gamma(\frac{n}{2})^{2} }
        +
        \frac{ 2 n \pi^{\frac n 2} }{ (n+1) \Gamma(\tfrac{n}{2}) }
        .
    \end{align}
\end{theorem}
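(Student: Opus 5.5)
The upper bound $\fullness{S} \leq \tfrac{1}{n}\minsolidangle{S}$ was already established before the statement. It follows from containment of $S$ in the sector suspended by any vertex over its spherical simplex with radius $\diameter{S}$. So only the lower bound needs work, and I would generalize the tetrahedral proof of Theorem~\ref{theorem:tetrahedral-main-result}.

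\emph{Setup.} Normalize $\diameter{S}=1$ and place the diameter along $e_n$: $v_0 = 0$, $v_n = e_n$, and $v_i = (f_i,\theta_i)$ with $f_i \in \bbR^{n-1}$ and $\theta_i \in (0,1)$ for $1 \leq i \leq n-1$. Let $F := \convex\{0,f_1,\dots,f_{n-1}\}$. By Cavalieri's principle $n \volume{S} = \volume{F}$, since $S$ is a pyramid-like object over the unit segment whose horizontal shadow determines the volume. Via the gnomonic projection, $\solidangle_0 = \int_{K_0} (1+\|z\|^2)^{-n/2}\,dz$ with $K_0 = \convex\{0, f_i/\theta_i\}$. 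Likewise $\solidangle_n$ is given by $K_n = \convex\{0, f_i/(1-\theta_i)\}$.

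\emph{Averaged product estimate.} For each subset $L \subseteq \{1,\dots,n-1\}$ I would bound $\solidangle_0$ by an integral over the unbounded region
\begin{align*}
    \sum_{i\in L} [0,1]\,\theta_i^{-1} f_i + \cone\{ f_j : j \notin L \}.
\end{align*}
Write $\ell = |L|$. Integrating out the $n-1-\ell$ cone directions against the weight $(1+\|z\|^2)^{-n/2}$ leaves a factor comparable to the $\ell$-dimensional volume of the parallelepiped spanned by $\{\theta_i^{-1} f_i\}_{i\in L}$, projected orthogonally to the cone span. This factor is multiplied by a radial constant, which is a Beta integral and produces the ratio $\Gamma(\tfrac{\ell+1}{2})\Gamma(\tfrac{n-\ell}{2})/\Gamma(\tfrac n2)^2$ times a power of $\pi$. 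Symmetrically, $\solidangle_n$ is bounded using the complementary set $L^c$ with weights $(1-\theta_j)^{-1}$.

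Multiplying the two bounds, the projected volume for $L$ times the volume for $L^c$ (projected orthogonally to $\linhull\{f_i\}_{i\in L}$) gives $(n-1)!\,\volume{F}$ by the block determinant formula. The result has the form
\begin{align*}
    \solidangle_0\solidangle_n \leq c_\ell \prod_{i\in L}\theta_i^{-1}\prod_{j\notin L}(1-\theta_j)^{-1}\,\volume{F}.
\end{align*}
For the extreme cases $L=\emptyset$ and $L = \{1,\dots,n-1\}$, one of the two angles is instead bounded uniformly by $\unitspherearea{n-1}/(2n+2)$ using Theorem~\ref{theorem:sum-of-solid-angles}, while the other is bounded by $\volume{K_0}$ or $\volume{K_n}$. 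This is the source of the final term $2n\pi^{n/2}/((n+1)\Gamma(n/2))$.

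\emph{Combination and main obstacle.} Next I would use
\begin{align*}
    1 = \prod_{i}(\theta_i + (1-\theta_i)) = \sum_L \prod_{i\in L}\theta_i \prod_{j\notin L}(1-\theta_j),
\end{align*}
and multiply $\minsolidangle{S}^2 \leq \solidangle_0\solidangle_n$ termwise. The weights cancel exactly, leaving $\minsolidangle{S}^2 \leq \sum_L c_{|L|}\, n\volume{S}$. Counting subsets of size $\ell$ together with the binomial factors from the volume decomposition yields the $\binom{n-1}{\ell}^2$ in $C_n$.

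The hard step is the mixed-case integral bound. One must show that integrating $(1+\|z\|^2)^{-n/2}$ over a slab of bounded parallelepiped cross-section times a cone yields at most a constant times the projected $\ell$-volume, uniformly in the angles between the $f_i$. My first attempt would be to drop the cone to its full orthogonal complement $\bbR^{n-1-\ell}$ (or a half-space). I would then integrate in orthogonal coordinates: the integral over $\bbR^{n-1-\ell}$ of $(a^2+\|w\|^2)^{-n/2}$ equals $\pi^{(n-1-\ell)/2}\,\Gamma(\tfrac{\ell+1}{2})/\Gamma(\tfrac n2)\; a^{-(\ell+1)}$, with $a^2 = 1+\|x\|^2$. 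Bounding the remaining integrand by $1$ over the $\ell$-dimensional parallelepiped then gives the projected volume. Some care is needed so that the projections in the two factors match up and recover $\volume{F}$ via the determinant.
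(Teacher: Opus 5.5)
Your architecture is the paper's. You split the indices into $L$ and $L^c$, enclose $K_0$ in a bounded $\ell$-dimensional piece plus $\linhull\{f_j : j\notin L\}$, and integrate out the span to get $\pi^{(n-1-\ell)/2}\Gamma(\tfrac{\ell+1}{2})/\Gamma(\tfrac n2)$. You then do the same for $\solidangle_n$ with $L^c$ and average with the weights $\prod_{i\in L}\theta_i\prod_{j\notin L}(1-\theta_j)$.

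As executed, however, the argument does not reach the stated $C_n$ when $n\geq 4$. The loss comes from the parallelepiped. Write $\Pi_L := \sum_{i\in L}[0,1]\,f_i$ and $F_L := \convex\{0,f_i : i\in L\}$, and let $P_L$ be the orthogonal projection onto $(\linhull\{f_i\}_{i\in L})^{\perp}$; the factors $\theta_i^{-1}$ pull out identically in either version. Then $\Hausdorff^{\ell}(P_{L^c}\Pi_L) = \ell!\,\Hausdorff^{\ell}(P_{L^c}F_L)$, and similarly there is a factor $(n-1-\ell)!$ on the $\solidangle_n$ side.

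With parallelepipeds the best you can get is $\Hausdorff^{\ell}(P_{L^c}\Pi_L)\,\Hausdorff^{n-1-\ell}(P_L\Pi_{L^c}) \leq \lvert\det[f_1,\dots,f_{n-1}]\rvert = (n-1)!\,\volume{F}$. This is sharp when the two spans are orthogonal. So each subset of size $\ell$ contributes $(n-1)!$ instead of $\binom{n-1}{\ell}$, and summing gives $\binom{n-1}{\ell}(n-1)!$ in place of $\binom{n-1}{\ell}^{2}$. For $n=4$ the middle part of the constant becomes $72\pi^{2}$ instead of $36\pi^{2}$.

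The ``binomial factors from the volume decomposition'' you invoke never arise in your computation. They appear only for simplices, through $\Hausdorff^{\ell}(F_L)\,\Hausdorff^{n-1-\ell}(P_L F_{L^c}) = \frac{(n-1)!}{\ell!\,(n-1-\ell)!}\volume{F}$. The repair is the paper's choice: use the trivial inclusion $K_0\subseteq P_{L^c}K_0 + \linhull\{f_j : j\notin L\}$. Here $P_{L^c}K_0 = \convex\{0,\theta_i^{-1}P_{L^c}f_i : i\in L\}$ is a simplex of measure $\prod_{i\in L}\theta_i^{-1}\Hausdorff^{\ell}(P_{L^c}F_L)$, and no parallelepiped or cone is needed. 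As written, your argument still proves a quadratic lower bound, only with a larger constant.

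There are also two smaller slips. First, the product of the two \emph{projected} volumes is not equal to $(n-1)!\,\volume{F}$. The block determinant identity holds when only one factor is projected, so you must insert $\Hausdorff^{\ell}(P_{L^c}F_L)\leq\Hausdorff^{\ell}(F_L)$; this goes in the right direction, so it is harmless. Second, in the extreme cases Theorem~\ref{theorem:sum-of-solid-angles} bounds $\minsolidangle{S}$ by $\unitspherearea{n-1}/(2n+2)$, not $\solidangle_0$ or $\solidangle_n$, and an individual vertex angle can be larger. You should write $\minsolidangle{S}^{2}\leq\minsolidangle{S}\,\solidangle_n\leq\frac{\unitspherearea{n-1}}{2n+2}\Hausdorff^{n-1}(K_n)$.
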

\begin{proof}
    The upper bound has already been shown. We address the lower bound.
    Suppose that $S$ has diameter equal to $1$. 
    Without loss of generality, after rotation, translation, and relabeling of vertices, 
    \begin{align*}
        v_{0} = 0, \quad v_{n} = e_{n}, \quad \| v_{0} - v_{n} \| = 1.
    \end{align*}
    Now, the remaining vertices $v_{1}, \dots, v_{n-1}$ can be written as 
    \begin{align*}
        v_{i} = ( f_{i}, \theta_{i} ), \qquad f_{i} \in \bbR^{n-1}, \qquad \theta_{i} \in \bbR.
    \end{align*}
    Since the edge from $v_{0}$ to $v_{n}$ realizes the diameter of $S$, for all $1 \leq i \leq n-1$:
    \begin{align*}
        f_{i} \neq 0, \qquad 0 < \theta_{i} < 1, 
        \qquad 
        \theta_{i}^{2} + \|f_{i}\|^{2} \leq 1, \qquad (1-\theta_{i})^{2} + \|f_{i}\|^{2} \leq 1
        .
    \end{align*}
    
    We write $\solidangle_{0} := \solidangle_{0}(S)$ and $\solidangle_{n} := \solidangle_{n}(S)$ as integrals.
    The vertices $v_{1},\dots,v_{n-1}$ are located in the northern half-space $\halfspace_{n}$. 
    Let $\unitv_{1}, \dots, \unitv_{n-1}$ be their radial projections on the northern hemisphere $\halfspace_{n} \cap \unitsphere{n-1}$, so that $\unitv_i = v_i / \|v_i\|$. 
    Their gnomonic projections onto the northern tangential plane $\bbE_{n-1}$ are 
    \begin{align*}
        \Gnomonic(\unitv_{i}) = ( f_{i} / \theta_{i}, 1 ) \in \bbE_{n-1}.
    \end{align*}
    We take the gnomonic projection of the spherical simplex at $v_{0}$ to the northern tangential plane $\bbE_{n-1}$. 
    Omitting the last coordinate, the image of the spherical simplex is the lower-dimensional simplex
    \begin{gather}
        K_{0}
        :=
        \convex\left\{
            0,
            \frac{f_{1}}{\theta_{1}},
            \dots,
            \frac{f_{n-1}}{\theta_{n-1}}
        \right\}
        \subset 
        \bbR^{n-1}
        .
    \end{gather}
    Therefore, 
    \begin{gather}
        \solidangle_{0} = \int_{K_{0}} ( 1 + \|z\|^2)^{-\frac{n}{2}} dz.
    \end{gather}
    A completely analogous argument applies to the solid angle $\solidangle_{n}$ at the vertex $v_{n}$, 
    with the main change being that factors $\theta_{i}$ are replaced by $1-\theta_{i}$. 
    Accordingly, 
    \begin{align*}
        \solidangle_n
        =
        \int_{K_{n}}( 1 + \|z\|^2 )^{-\frac{n}{2}} dz,
        \qquad
        K_{n}
        :=
        \convex\left\{
            0, \frac{f_1}{1-\theta_1}, \dots, \frac{f_{n-1}}{1-\theta_{n-1}}
        \right\}.
    \end{align*}

    Next, we consider any partition of the indices $\{1,\dots,n-1\}$ into two sets:
    let $I_A, I_B \subseteq \{1,\dots,n-1\}$ with 
    \begin{gather} \label{math:index-partition}
        I_{A} \cap I_{B} = \emptyset, 
        \qquad 
        I_{A} \cup I_{B} = \{ 1, \dots, n-1 \},
    \end{gather} 
    and define cardinalities 
    \begin{gather*} 
        \ell_{A} := |I_{A}|, 
        \qquad 
        \ell_{B} := |I_{B}|.
    \end{gather*}
    Notice that $\ell_{A} + \ell_{B} = {n-1}$. 
    We define the subspaces 
    \begin{align*}
        U_{A} := \linhull\{ f_i \suchthat i \in I_{A} \}, 
        \qquad
        U_{B} := \linhull\{ f_i \suchthat i \in I_{B} \},
    \end{align*}
    and the orthogonal projections onto their respective orthogonal complements 
    \begin{align*}
        P_{A} : \bbR^{n-1} \to U_{A}^\perp,
        \qquad 
        P_{B} : \bbR^{n-1} \to U_{B}^\perp.
    \end{align*}
    Since $\volume{S} > 0$, the vectors $f_1, \dots, f_{n-1}$ are linearly independent.
    Hence, $\dim U_{A} = \ell_{A}$ and $\dim U_{B} = \ell_{B}$. 
    
    We also define the following simplices and their corresponding volumes: 
    \begin{gather*}
        F     := \convex\{ 0, f_1, \dots, f_{n-1} \},
        \qquad 
        F_{A} := \convex\{ 0, f_{i} \suchthat i \in I_{A} \},
        \qquad 
        F_{B} := \convex\{ 0, f_{i} \suchthat i \in I_{B} \},
        \\
        \volume{F} := \Hausdorff^{n-1}     ( F     ),
        \qquad 
        \volumesymbol_{A} := \Hausdorff^{\ell_{A}}( F_{A} ),
        \qquad 
        \volumesymbol_{B} := \Hausdorff^{\ell_{B}}( F_{B} ).
    \end{gather*}

    Due to Cavalieri's principle, $S$ has the same measure as the convex hull of $e_{n}$ and $F$ (embedded in the coordinate plane $e_{n}^{\perp}$).
    Hence, $n! \volume{S} = (n-1)! \volume{F}$.
    Again via Cavalieri's principle, 
    $F$ has the same measure as the convex hull of $F_{A}$ and the projected simplex $P_{A} F_{B}$.
    Similarly, 
    $F$ has the same measure as the convex hull of $F_{B}$ and the projected simplex $P_{B} F_{A}$.
    Consequently, 
    \begin{align*}
        \Hausdorff^{\ell_{A}}( F_{A} ) \Hausdorff^{\ell_{B}}( P_{A} F_{B} ) = \frac{(n-1)!}{\ell_{A}! \ell_{B}!} \volume{F},
        \qquad 
        \Hausdorff^{\ell_{A}}( P_{B} F_{A} ) \Hausdorff^{\ell_{B}}( F_{B} ) = \frac{(n-1)!}{\ell_{A}! \ell_{B}!} \volume{F}.
    \end{align*}
    In particular, 
    \begin{align}\label{math:measure-of-projected-subsimplices}
        \Hausdorff^{\ell_{B}}( P_{A} F_{B} ) = \frac{(n-1)!}{\ell_{A}! \ell_{B}!} \frac{ \volume{F} }{ \volumesymbol_{A} },
        \qquad 
        \Hausdorff^{\ell_{A}}( P_{B} F_{A} ) = \frac{(n-1)!}{\ell_{A}! \ell_{B}!} \frac{ \volume{F} }{ \volumesymbol_{B} }.
    \end{align}
    Since the projections are orthogonal, 
    \begin{align}\label{math:upper-bounds-of-projected-simplices}
        \Hausdorff^{\ell_{B}}( P_{A} F_{B} ) \leq \volumesymbol_{B},
        \qquad 
        \Hausdorff^{\ell_{A}}( P_{B} F_{A} ) \leq \volumesymbol_{A}, 
        \qquad 
        \volume{F} \leq \frac{\ell_{A}! \ell_{B}!}{(n-1)!} \volumesymbol_{A} \volumesymbol_{B}.
    \end{align}
    
    This gives rise to estimates for the projected simplices $P_{B} K_{0}$ and $P_{A} K_{n}$. 
    Recall that $K_{0}$ is the convex hull of the origin and $f_{1} / \theta_{1}, \dots, f_{n-1} / \theta_{n-1}$. 
    Therefore, $P_{B} K_{0}$ is the convex hull of the origin and $P_{B} f_{i} / \theta_{i}$, where $i \in I_{A}$. 
    Thus, 
    \begin{align}\label{math:estimate-projected-K:A}
        \Hausdorff^{\ell_{A}}( P_{B} K_{0} ) 
        =
        \prod_{i \in I_A} \theta_{i}^{-1}
        \Hausdorff^{\ell_{A}}( P_{B} F_A ) 
        .
    \end{align}
    Along a completely analogous line of reasoning, one obtains 
    \begin{align}\label{math:estimate-projected-K:B}
        \Hausdorff^{\ell_{B}}( P_{A} K_n ) 
        = 
        \prod_{i \in I_B} (1-\theta_{i})^{-1}
        \Hausdorff^{\ell_{B}}( P_{A} F_B ) 
        .
    \end{align}
    
    We finally bound the integrals. 
    Obviously, $K_{0} \subseteq P_{B} K_{0} + U_{B}$. 
    The latter is an orthogonal sum. 
    Fubini's theorem gives
    \begin{align}\label{math:first-integral-estimate}
        \solidangle_{0}
        =
        \int_{K_{0}} ( 1 + \|z\|^2)^{-\frac{n}{2}} dz
        &
        \leq
        \int_{ P_{B} K_{0} } 
        \int_{ U_{B} }
        ( 1 + \|x\|^2 + \|y\|^2 )^{-\frac{n}{2}} 
        dy dx
        .
    \end{align}
    Recall the following Laplace transform identity: %
    \begin{align}\label{math:laplace-transform-identity}
        s^{-\frac{n}{2}} = \Gamma\!\left(\tfrac{n}{2}\right)^{-1} \int_{0}^{\infty} \tau^{\frac{n}{2} - 1} e^{-s\tau} d\tau
        .
    \end{align}
    Consider any $x \in P_{B} K_{0}$. We apply~\eqref{math:laplace-transform-identity} and find 
    \begin{align*}
        \int_{U_{B}} 
        ( 1 + \|x\|^2 + \|y\|^2 )^{-\frac{n}{2}} 
        dy
        &
        =
        \Gamma\!\left(\tfrac{n}{2}\right)^{-1} 
        \int_{0}^{\infty} \int_{\bbR^{\ell_{B}}}
        \tau^{\frac{n}{2} - 1} e^{-(1 + \|x\|^2 + \|y\|^2)\tau} 
        dy d\tau
        \\&
        =
        \Gamma\!\left(\tfrac{n}{2}\right)^{-1} 
        \int_{0}^{\infty} 
        \tau^{\frac{n}{2} - 1} e^{-(1 + \|x\|^2)\tau} 
        \int_{\bbR^{\ell_{B}}}
        e^{- \tau \|y\|^2 } 
        dy
        d\tau
        \\&
        =
        \Gamma\!\left(\tfrac{n}{2}\right)^{-1} 
        \int_{0}^{\infty} 
        \tau^{\frac{n-\ell_{B}}{2} - 1} e^{-(1 + \|x\|^2)\tau} d\tau
        \int_{\bbR^{\ell_{B}}}
        e^{- \|y\|^2 } dy
        .
    \end{align*}
    Using the Laplace transform identity~\eqref{math:laplace-transform-identity} once again and integrating the Gaussian yields 
    \begin{align*}
        \int_{\bbR^{\ell_{B}}}( 1 + \|x\|^2 + \|y\|^2 )^{-\frac{n}{2}} dy
        =
        \pi^{\frac{ \ell_{B} }{ 2 }}
        \Gamma\!\left(\tfrac{n}{2}\right)^{-1} 
        \Gamma\!\left(\tfrac{n-\ell_{B}}{2}\right)
        ( 1 + \|x\|^{2} )^{ -\frac{ n-\ell_{B} }{2} }
        .
    \end{align*}
    With $n - \ell_{B} = \ell_{A} + 1$, this results in %
    \begin{align}\label{math:solid-angle-0}
        \solidangle_{0}
        &
        \leq
        \pi^{\frac{ \ell_{B} }{ 2 }}
        \Gamma\!\left(\tfrac{n}{2}\right)^{-1} 
        \Gamma\!\left(\tfrac{\ell_{A}+1}{2}\right)
        \Hausdorff^{\ell_{A}}( P_{B} K_{0} )
        .
    \end{align}
    For the other solid angle $\solidangle_{n}$, a fully analogous integration argument with the two index sets switched yields
    \begin{align}\label{math:solid-angle-n}
        \solidangle_n
        &
        \leq
        \pi^{\frac{ \ell_{A} }{ 2 }}
        \Gamma\!\left(\tfrac{n}{2}\right)^{-1} 
        \Gamma\!\left(\tfrac{\ell_{B}+1}{2}\right)
        \Hausdorff^{\ell_{B}}( P_{A} K_n )
        .
    \end{align} 
    We combine~\eqref{math:measure-of-projected-subsimplices},~\eqref{math:upper-bounds-of-projected-simplices},~\eqref{math:estimate-projected-K:A},~\eqref{math:estimate-projected-K:B},~\eqref{math:solid-angle-0}, and~\eqref{math:solid-angle-n} and recall $\ell_{A} + \ell_{B} = n - 1$. 
    Now, 
    \begin{align}\label{math:estimate-per-partition}
        \minsolidangle{S}^{2} 
        \leq 
        \solidangle_{0} \solidangle_n 
        \leq 
        \frac{ 
            \pi^{\frac{ n-1 }{ 2 }}
            \Gamma( \frac{ \ell_{A}+1 }{ 2 } ) \Gamma( \frac{ \ell_{B}+1 }{ 2 } )
            \Gamma(\frac{n}{2})^{-2}
        }{
            \prod_{i \in I_A} \theta_{i}
            \prod_{i \in I_B} (1-\theta_{i})
        }
        \left( \frac{(n-1)!}{\ell_{A}! \ell_{B}!} \right)^{2}
        \frac{\volumesymbol_{F}^2}{\volumesymbol_{A} \volumesymbol_{B}},
        \qquad 
        \frac{\volumesymbol_{F}^2}{\volumesymbol_{A} \volumesymbol_{B}}
        \leq 
        \frac{\ell_{A}! \ell_{B}!}{(n-1)!}
        n 
        \volume{S}
        .
    \end{align}
    An improvement involves Theorem~\ref{theorem:sum-of-solid-angles} and concerns the extremal cases of the index partitions. 
    If $\ell_{A} = 0$, then $P_{A} K_n = K_n$ and 
    \begin{align*}
        \minsolidangle{S}^{2} 
        \leq 
        \minsolidangle{S}
        \solidangle{n}
        \leq 
        \frac{\unitspherearea{n-1}}{2n+2}
        \Hausdorff^{n-1}( K_n )
        =
        \frac{\unitspherearea{n-1}}{2n+2}
        \prod_{1 \leq i \leq n-1} (1-\theta_{i})^{-1}
        \volume{F}
        = 
        \frac{ \pi^{\frac n 2} \Gamma(\tfrac{n}{2})^{-1} (n+1)^{-1} n }{ \prod_{1 \leq i \leq n-1} (1-\theta_{i}) }
        \volume{S}
        .
    \end{align*}
    If $\ell_{A} = n-1$, then $P_{B} K_0 = K_0$ and 
    \begin{align*}
        \minsolidangle{S}^{2} 
        \leq 
        \frac{ \pi^{\frac n 2} \Gamma(\tfrac{n}{2})^{-1} (n+1)^{-1} n }{ \prod_{1 \leq i \leq n-1} \theta_{i} }
        \volume{S}
        .
    \end{align*}

    We sum over all the possible partitions~\eqref{math:index-partition}. 
    Since the coefficients only depend on the cardinalities of the index sets, 
    we arrive at the upper bound
    \begin{align*}
        \minsolidangle{S}^{2}
        \leq 
        C_{n}
        \volume{S}
        ,
        \qquad 
        C_{n}
        := 
        n 
        \pi^{\frac{ n-1 }{ 2 }}
        \sum_{ \ell = 1 }^{n-2}
        \tbinom{ n-1 }{ \ell }^{2} %
        \frac{ \Gamma( \frac{ \ell+1 }{ 2 } ) \Gamma( \frac{ n - \ell }{ 2 } ) }{ \Gamma(\frac{n}{2})^{2} }
        +
        \frac{ 2 n \pi^{\frac n 2} }{ (n+1) \Gamma(\tfrac{n}{2}) }
        .
    \end{align*}
    Since $\diameter{S} = 1$, we have $\volume{S} = \fullness{S}$ 
    and thus~\eqref{math:main-result} follows. 
    For general simplices with arbitrary diameters, the desired result follows by scaling invariance of $\fullness{S}$ and of the solid angles. 
\end{proof}

\begin{remark}
    A simplified, albeit rougher, estimate is of interest. 
    With the notation as in the proof of Theorem~\ref{theorem:main-result}, we consider the particular partition 
    \begin{align*}
        I_{A}:= \{ 1 \leq i \leq n - 1 \suchthat \theta_{i} > \tfrac{1}{2} \}, \qquad I_{B} := \{ 1 \leq i \leq n - 1 \suchthat \theta_{i} \leq \tfrac{1}{2} \}.
    \end{align*}
    Using this partition in~\eqref{math:estimate-per-partition}, this delivers the upper bound 
    \begin{align}
        \minsolidangle{S}^{2} 
        \leq 
        \solidangle_{0} \solidangle_n 
        \leq 
        2^{n-1} 
        \pi^{\frac{ n-1 }{ 2 }}
        \Gamma( \tfrac{ \ell_{A}+1 }{ 2 } ) \Gamma( \tfrac{ \ell_{B}+1 }{ 2 } )
        \Gamma(\tfrac{n}{2})^{-2}
        \frac{n!}{\ell_{A}! \ell_{B}!}
        \volume{S}
        .
    \end{align}
    By taking the maximum over $0 \leq \ell_{A} \leq n-1$, the resulting constant only depends on $n$. 
    The preceding proof can be interpreted as a refinement of this initial rough estimate. 
\end{remark}

\begin{example}\label{example:optimality-of-quadratic-exponent-in-all-dimensions}
    The following example illustrates the optimality of the quadratic exponent in Theorem~\ref{theorem:main-result} for simplices of any dimension $n \geq 3$. 
    Let $\epsilon > 0$ be sufficiently small. 
    The vertices 
    \begin{gather*}
        v_{0} = 0, \quad v_{1} = \epsilon e_{1}, 
        \quad 
        \dots 
        \quad v_{n-2} = \epsilon e_{n-2}, 
        \quad 
        v_{n-1} = e_{n-1} - \epsilon^{n-2} e_{n},
        \quad 
        v_{n  } = e_{n-1} + \epsilon^{n-2} e_{n}
    \end{gather*}
    define an $n$-dimensional simplex.
    Simple direct computations show that 
    \begin{align*}
        \diameter{S} = \sqrt{ 1 + \epsilon^{2} + \epsilon^{2(n-2)}} %
        ,
        \qquad 
        \volume{S} = \frac{2}{n!} \epsilon^{2(n-2)}
        ,
        \qquad 
        \fullness{S} = \frac{ 2 \epsilon^{2(n-2)} }{ n! ( 1 + \epsilon^{2} + \epsilon^{2(n-2)} )^{\frac n 2} }
        .
    \end{align*}
    By symmetry, 
    \begin{align*}
        \solidangle_{0}(S)
        =
        \frac{ \unitspherearea{n-1} }{ 2^{n-2} \pi } 
        \arctan( \epsilon^{n-2} )
        .
    \end{align*}
    For $\epsilon > 0$ small enough, 
    \begin{align*} 
        \frac{ \epsilon^{n-2} }{ 2^{n} } 
        \unitspherearea{n-1}
        \leq 
        \solidangle_{0} 
        \leq 
        \frac{ \unitspherearea{n-1} }{ 2^{n-2} \pi } \epsilon^{n-2}
        .
    \end{align*}

    By symmetry, too, $\solidangle_{1}(S) = \dots = \solidangle_{n-2}(S)$ and $\solidangle_{n-1}(S) = \solidangle_{n}(S)$.
    Let 
    \begin{align*}
        M_{1}
        :=
        \begin{pmatrix}
            -\epsilon & -\epsilon & \dots    & -\epsilon & -\epsilon       & -\epsilon 
            \\
                      & \epsilon  &          &           &                 &
            \\
                      &           & \ddots   &           &                 &
            \\
                      &           &          &  \epsilon &                 &
            \\
                      &           &          &           &  1              & 1
            \\
                      &           &          &           & -\epsilon^{n-2} & \epsilon^{n-2}            
        \end{pmatrix}
        ,
        \qquad 
        M_{n}
        :=
        \begin{pmatrix}
                             & \epsilon        &                 &                 &             
            \\
                             &                 & \ddots          &                 &           
            \\
                             &                 &                 &  \epsilon       &           
            \\
             -1              &  -1             & \dots           &  -1             &                    
            \\
             -\epsilon^{n-2} & -\epsilon^{n-2} & \dots           & -\epsilon^{n-2} & -2\epsilon^{n-2} 
        \end{pmatrix}
    \end{align*}
    be the matrices whose columns are the edges of the simplex $S$ that emanate from the vertices $v_{1}$ and $v_{n}$, respectively. Their determinants have absolute value $2 \epsilon^{2(n-2)}$. 
    For any $x \in \bbR^{n}$ with positive entries, we notice 
    \begin{align*}
        \| M_{1} x \|^{2} 
        &
        = 
        \epsilon^{2} ( \tsum_{i=1}^{n} x_{i} )^{2} 
        + 
        \epsilon^{2} \tsum_{i=2}^{n-2} x_{i}^{2} 
        + 
        ( x_{n-1} + x_{n} )^{2} 
        + 
        \epsilon^{2(n-2)} ( x_{n-1} - x_{n} )^{2}
        .
    \end{align*}
    Assuming that $\epsilon > 0$ is small, the following simple estimates hold:
    \begin{align*}
        \| M_{1} x \|^{2} 
        &
        \leq  
        n \epsilon^{2} \tsum_{i=1}^{n} x_{i}^{2} 
        + 
        \epsilon^{2} \tsum_{i=1}^{n-2} x_{i}^{2} 
        + 
        ( 1 + \epsilon^{2(n-2)} ) ( x_{n-1} + x_{n} )^{2} 
        \\&
        \leq  
        ( 1 + n ) \epsilon^{2} \tsum_{i=1}^{n-2} x_{i}^{2} 
        + 
        n \epsilon^{2} ( x_{n-1}^{2} + x_{n}^{2} )
        + 
        ( 1 + \epsilon^{2(n-2)} ) ( x_{n-1} + x_{n} )^{2} 
        \\&
        \leq  
        ( 1 + n ) \epsilon^{2} \tsum_{i=1}^{n-2} x_{i}^{2} 
        + 
        ( 1 + \epsilon^{2(n-2)} + n \epsilon^{2} ) ( x_{n-1} + x_{n} )^{2} 
        ,
        \\
        \| M_{1} x \|^{2} 
        &
        \geq  
        \epsilon^{2} ( \tsum_{i=1}^{n-2} x_{i}^{2} ) 
        + 
        ( x_{n-1}^{2} + x_{n}^{2} )
        .
    \end{align*}
    Applying the lower and upper bound in the representation~\eqref{math:orthant-integral} of the solid angles as orthant integral and performing a substitution,
    together with $n! \volume{S} = 2 \epsilon^{2(n-2)}$, yields
    \begin{align*}
        \frac{
            \unitspherearea{n-1}\epsilon^{n-2}
        }{
            2^n
            (n+1)^{\frac{n-2}{2}}
            (1+n\epsilon^2+\epsilon^{2(n-2)})
        }
        \leq
        \solidangle_1(S)
        \leq
        \frac{
            \unitspherearea{n-1}
        }{
            2^{n-1}
        }
        \epsilon^{n-2}.
    \end{align*}
    Similarly, the norm of $M_{n} x$ expands like 
    \begin{align*}
        \| M_{n} x \|^{2} 
        &
        = 
        \epsilon^{2} \tsum_{i=2}^{n-1} x_{i}^{2} 
        + 
        ( \tsum_{i=1}^{n-1} x_{i} )^{2} 
        + 
        \epsilon^{2(n-2)} ( \tsum_{i=1}^{n-1} x_{i} + 2 x_{n} )^{2}
        ,
    \end{align*}
    and we derive the estimates 
    \begin{align*}
        \| M_{n} x \|^2
        &
        \leq
        \epsilon^2
        \tsum_{i=2}^{n-1} x_{i}^2
        +
        ( 1 + 2 \epsilon^{2(n-2)} )
        ( \tsum_{i=1}^{n-1} x_{i} )^2
        +
        8 \epsilon^{2(n-2)} x_{n}^2
        \\&
        \leq
        ( n - 1 + \epsilon^2 + 2(n-1)\epsilon^{2(n-2)} )
        \tsum_{i=1}^{n-1} x_{i}^2
        +
        8 \epsilon^{2(n-2)} x_{n}^2,
        \\
        \| M_{n} x \|^{2} 
        &
        \geq 
        \tsum_{i=1}^{n-1} x_{i}^{2} + \epsilon^{2(n-2)} 4 x_{n}^{2}
        .
    \end{align*}
    Again applying the lower and upper bound in the representation~\eqref{math:orthant-integral} results in the following lower and upper bound for the solid angle:
    \begin{align*}
        \frac{
            \unitspherearea{n-1}
            \epsilon^{n-2}
        }{
            2^n \sqrt{2}
            ( n - 1 + \epsilon^2 + 2(n-1) \epsilon^{2(n-2)} )^{\frac{n-1}{2}}
        }
        \leq
        \solidangle_n(S)
        \leq
        \frac{
            \unitspherearea{n-1}
        }{
            2^n
        }
        \epsilon^{n-2}.
    \end{align*}
    We conclude that there exist constants $c_{n}, C_{n} > 0$, depending only on $n$, such that 
    \begin{align*}
        c_{n} \minsolidangle{S}^{2} \leq \volume{S} \leq C_{n} \minsolidangle{S}^{2}
        .
    \end{align*}
    This illustrates that the quadratic exponent $2$ is optimal.
\end{example}

We finish this discussion with low-dimensional examples. 
Triangles admit an improved bound where the fullness is bounded not only from above but also from below by multiples of the minimum (solid) angle.
Tetrahedral minimum angle conditions were studied by Liu and Joe~\cite{liu1994relationship},
and both Theorem~\ref{theorem:tetrahedral-main-result} and Theorem~\ref{theorem:main-result} for $n=3$ yield improved bounds. 
Lastly, the minimum solid angle condition for pentatopes is illustrated. 

\begin{example}
    Let $S$ be a triangle with angles $\alpha, \beta, \gamma \in (0,\pi)$ and opposite side lengths $a, b, c > 0$, respectively. 
    Assume that $\alpha$ equals the minimal angle and that $c$ equals the diameter. 
    Using the law of sines, 
    \begin{align*}
        2 \volume{S} 
        = 
        a b \sin(\gamma) 
        =
        c^{2} \frac{ \sin(\alpha) \sin(\beta) }{ \sin(\gamma) }
        = 
        c^{2} \sin(\alpha) \frac{ \sin(\beta) }{ \sin(\alpha + \beta) }
        .
    \end{align*}
    Since $\alpha \leq \beta \leq \pi-\alpha$ and $3\alpha \leq \pi$, 
    we have $\sin(\alpha) \leq \sin(\beta)$ and $\cos(\beta) \leq \cos(\alpha)$.
    From $b \leq c$ we get $\sin(\beta) \leq \sin(\gamma)$, and since $\beta + \gamma < \pi$ we get $\beta \leq \gamma$, hence $\cos(\beta) \geq 0$.
    By the angle sum identity,
    \begin{align*}
        \frac{\sin(\alpha+\beta)}{\sin(\beta)}
        = 
        \frac{\sin(\alpha)\cos(\beta) + \cos(\alpha)\sin(\beta)}{\sin(\beta)}
        \leq 
        \cos(\beta) + \cos(\alpha)
        \leq 
        2 \cos(\alpha)
        .
    \end{align*}
    With $\alpha \leq \tan(\alpha)$ and $\sin(\alpha) \leq \alpha$, we find 
    \begin{align*}
         \tfrac{1}{4} \alpha \leq \tfrac{1}{4} \tan(\alpha)
         \leq 
         \frac{\volume{S}}{\diameter{S}^{2}} \leq \tfrac{1}{2} \sin(\alpha) \leq \tfrac{1}{2} \alpha.
    \end{align*}
    This proves the minimum angle of a triangle to be a shape measure comparable to the fullness.
    Using Theorem~\ref{theorem:main-result} and $\alpha \leq \pi/3$, we get the weaker estimates 
    \begin{align}
        \tfrac{ 3 }{ 4 \pi } 
        \alpha^{2}
        \leq 
        \fullness{S}
        \leq 
        \tfrac{1}{2} \alpha 
        ,
        \qquad 
        \tfrac{ 3 }{ 4 \pi } 
        \alpha^{2}
        \leq 
        \tfrac{1}{4}
        \alpha
        .
    \end{align}
    Thus, Theorem~\ref{theorem:main-result} recovers natural inequalities relating the triangle fullness and minimum angle.
\end{example}

\begin{example}
    We consider a tetrahedron $S$ with vertices $v_{0}, v_1, v_2, v_{3}$. 
    Liu and Joe~\cite{liu1994relationship} compare shape measures of tetrahedra,
    including the minimum solid angle $\minsolidangle{S}$ and the \emph{mean ratio} $\meanratio{S}$,
    where~\cite[(3.4)]{liu1994relationship}  
    \begin{align*}
        \meanratio{S} := 
        12 ( 3 \volume{S} )^{\frac{2}{3}} \left( \tsum_{0 \leq i < j \leq 3} \| v_i - v_j \|^{2} \right)^{-1}
        .
    \end{align*}
    This implies a comparison between the fullness and the minimum solid angle of tetrahedra, as we now show.
    Without loss of generality, $\diameter{S} = \| v_{0} - v_1 \|$. Whereas $\| v_2 - v_{3} \|$ can be arbitrarily small, 
    \begin{align*}
        \| v_{0} - v_2 \|^{2} + \| v_2 - v_1 \|^{2} + \| v_{0} - v_{3} \|^{2} + \| v_{3} - v_1 \|^{2} 
        \geq 
        \diameter{S}^{2}
        .
    \end{align*}
    Therefore, the fullness and the mean ratio satisfy the comparison 
    \begin{align*}
        2^{\frac{3}{2}} 3 \fullness{S}
        \leq 
        \meanratio{S}^{\frac{3}{2}} 
        \leq 
        6^{\frac{3}{2}} 3 \fullness{S}
        .
    \end{align*}
    Define $\sineofminsolidangle{S} := \sin( \minsolidangle{S} / 2 )$. 
    By Liu and Joe's comparison result~\cite[Theorem~5.2]{liu1994relationship}, %
    \begin{align*}
        \tfrac{1}{16} \meanratio{S}^{\frac{3}{2}} \leq \sineofminsolidangle{S} \leq 8^{\frac 1 4} \meanratio{S}^{\frac 3 4}
        .
    \end{align*}
    The sum of the four solid angles in $S$ is at most $2\pi$, 
    thus $\minsolidangle{S} \leq \pi / 2$.
    The inequalities
        $
        \tfrac{ \sqrt{2} }{ \pi } \minsolidangle{S} 
        \leq 
        \sineofminsolidangle{S} 
        \leq 
        \tfrac{1}{2} \minsolidangle{S}
        $
    follow. 
    The combination of these findings gives 
    \begin{align}\label{math:joe-liu-fullness-vs-angle}
        3 \frac{ \sqrt{2} }{4} \fullness{S}
        \leq 
        \minsolidangle{S}
        ,
        \qquad 
        \minsolidangle{S}^{2}
        \leq 
        36
        \sqrt{3}
        \pi^{2} 
        \fullness{S}
        .
    \end{align}
    We use Theorem~\ref{theorem:main-result} with $n=3$ and the fact $\Gamma(\tfrac{3}{2}) = \sqrt{\pi}/2$ to find the stronger estimates 
    \begin{align}
        3 \fullness{S}
        \leq 
        \minsolidangle{S}
        ,
        \qquad 
        \minsolidangle{S}^{2}
        \leq 
        ( 48 + 3 \pi )
        \fullness{S}
        .
    \end{align}
    In addition, Theorem~\ref{theorem:tetrahedral-main-result} further improves the constant and yields
    \begin{align}
        3 \fullness{S}
        \leq 
        \minsolidangle{S}
        ,
        \qquad 
        \minsolidangle{S}^{2}
        \leq 
        ( 12 + 3 \pi )
        \fullness{S}
        .
    \end{align}
    In particular, while a comparison between fullness and minimum solid angle of a tetrahedron already follows from known results,
    this manuscript yields tighter constants for this comparison of shape measures. 
\end{example}

\begin{example}
    If $S \subset \bbR^{4}$ is a $4$-simplex (pentatope), then Theorem~\ref{theorem:main-result} yields 
    \begin{align}
        4 \fullness{S}
        \leq 
        \minsolidangle{S}
        ,
        \qquad 
        \minsolidangle{S}^{2} 
        \leq 
        \frac{188}{5} \pi^{2}
        \fullness{S}
        .
    \end{align}
\end{example}

\end{document}